\setlist[enumerate,1]{font=\normalfont, label=(\roman*)}
\newtheorem{theorem}{Theorem}[section]
\newtheorem{lemma}[theorem]{Lemma}
\newtheorem{proposition}[theorem]{Proposition}
\newtheorem{setup}[theorem]{Setup}
\theoremstyle{definition}
\newtheorem{definition}[theorem]{Definition}
\newtheorem{remark}[theorem]{Remark}
\DeclareMathOperator{\cone}{cone}
\DeclareMathOperator{\ev}{ev}
\DeclareMathOperator{\Hom}{Hom}
\DeclareMathOperator{\Ext}{Ext}
\DeclareMathOperator{\cok}{cok}
\DeclareMathOperator{\Stab}{Stab}
\DeclareMathOperator{\Sing}{Sing}
\DeclareMathOperator{\Pic}{Pic}
\DeclareMathOperator{\RHom}{\mathbf{R}Hom}
\title[Stability conditions on a singular quadric threefold]{Stability conditions on a singular quadric threefold}  
\author[Tzu-Yang Chou]{Tzu-Yang Chou}
\begin{document}

\begin{abstract}
Let $X \subset \mathbb{P}^4$ be a quadric threefold with a single ordinary double point, and let $\mathcal{K}u(X)$ be its Kuznetsov component. In this paper, we construct a weak stability condition on Kuznetsov's categorical resolution $\widetilde{D} \subset \mathrm{D^b}(\widetilde{X})$, compatible with the Verdier localization $\mathbf{R}\pi_* \colon \widetilde{D} \to \mathrm{D^b}(X)$, and hence obtain a Bridgeland stability condition on $\mathrm{D^b}(X)$.
Restricting the construction, we obtain the corresponding statement for $\mathcal{K}u(X)$ and its categorical resolution $\widetilde{D}'$.
These can be viewed as a three-dimensional analogue of our previous result in~\cite{Cho25}.

We describe the geometry of the blow-up $\pi \colon \widetilde{X} \to X$ and obtain two semiorthogonal decompositions of $\mathrm{D^b}(\widetilde{X})$, arising from the projective bundle structure of $\widetilde{X}$ and from Kuznetsov's categorical resolution. Comparing them, we isolate an admissible subcategory $\widetilde{\mathcal{D}}\subset \mathrm{D^b}(\widetilde{X})$ resolving $\mathrm{D^b}(X)$ and show that it admits a full Ext-exceptional collection,
from which we construct the localization-compatible weak stability condition.

\end{abstract}

\maketitle

\section{Introduction}
Bridgeland stability conditions on triangulated categories, introduced in \cite{Bri07}, have become a central tool in the study of derived categories.
They are known to exist on curves \cite{Mac07} and on surfaces \cite{AB13}.
In dimension three, the tilt-stability approach of Bayer--Macr\`i--Toda \cite{BMT14} and its refinements provides a general framework for constructing Bridgeland stability conditions, and has led to existence results in a number of cases.

Very recently, Li proved in \cite{Li26} the existence of Bridgeland stability conditions on $\mathrm{D^b}(X)$ for any smooth projective variety $X$.
It is natural that we turn to study Bridgeland stability conditions on singular varieties.
Motivated by our previous work \cite{Cho25} on a singular surface and its resolution, it is natural to study stability conditions on a suitable categorical resolution first, and then compare them with stability conditions downstairs via the associated Verdier localization.

In this article, we extend our previous result in \cite{Cho25} to singular threefolds via Kuznetsov's categorical resolution \cite{Kuz08b},
and we first address the simplest such case, namely a quadric threefold with a single ordinary double point.

Our main result is the following:
\begin{theorem}\label{main1}
Let $X$ be a 1-nodal quadric threefold  and $\pi \colon \widetilde{X} \longrightarrow X$ be its blow-up. 

Then there exist a stability condition $\sigma_{\mathrm{D^b}(X)}=(Z_\mathcal{A}, \mathcal{A})$ on $\mathrm{D^b}(X)$,
and a weak stability condition $\sigma_{\widetilde{\mathcal{D}}} = (Z_{\widetilde{\mathcal{A}}},\widetilde{\mathcal{A}})$ on the categorical resolution $\widetilde{\mathcal{D}}$ of $\mathrm{D^b}(X)$,
such that they are related as follows.
\begin{enumerate}
    \item[(1)] $Z_{\widetilde{\mathcal{A}}} = Z_\mathcal{A} \circ \pi_\ast$;
    \item[(2)] $\mathbf{R}\pi_\ast \colon \widetilde{\mathcal{A}}\longrightarrow \mathcal{A}$ is an exact functor and preserves semistability.
\end{enumerate}
\end{theorem}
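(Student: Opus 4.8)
The plan is to construct $\sigma_{\widetilde{\mathcal{D}}'}$ directly from the full Ext-exceptional collection on $\widetilde{\mathcal{D}}'$ obtained by comparing the two semiorthogonal decompositions, and then to transport it to $\mathcal{K}u(X)$ along the Verdier localization $\mathbf{R}\pi_\ast$. Writing the collection as $(E_1,\dots,E_n)$, the standard theory of Ext-exceptional collections identifies $\widetilde{\mathcal{D}}'$ with $\mathrm{D^b}(A)$ for the finite-dimensional algebra $A=\mathrm{End}(\bigoplus_i E_i)$, so that the extension-closure $\widetilde{\mathcal{A}}:=\langle E_1,\dots,E_n\rangle$ is the heart of a bounded t-structure, equivalent to the finite-length category $\mathrm{mod}\text{-}A$ with simple objects the $E_i$. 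On the numerical lattice, which is freely generated by the classes $[E_i]$, a central charge $Z_{\widetilde{\mathcal{A}}}$ is specified by prescribing the values $Z_{\widetilde{\mathcal{A}}}(E_i)$; choosing each in the closed upper half-plane with the usual sign convention on the real axis makes $(Z_{\widetilde{\mathcal{A}}},\widetilde{\mathcal{A}})$ a weak stability function, and Harder--Narasimhan filtrations together with the support property follow from finiteness of $\mathrm{mod}\text{-}A$.

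The key additional input is to align these choices with the localization. Let $\mathcal{C}=\ker(\mathbf{R}\pi_\ast)\subseteq\widetilde{\mathcal{D}}'$, and split the $E_i$ into those lying in $\mathcal{C}$ (i.e.\ with $\mathbf{R}\pi_\ast E_i=0$) and those whose images generate $\mathcal{K}u(X)$. I would set $Z_{\widetilde{\mathcal{A}}}(E_i)=0$ precisely for the kernel objects and assign genuine upper-half-plane values to the rest; this is exactly the degeneracy that makes $\sigma_{\widetilde{\mathcal{D}}'}$ weak rather than full, and it forces $Z_{\widetilde{\mathcal{A}}}$ to factor as $Z_\mathcal{A}\circ\pi_\ast$ on numerical classes, yielding $(1)$ by construction. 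The surviving pushforwards $\mathbf{R}\pi_\ast E_i$ then form an Ext-exceptional collection generating $\mathcal{K}u(X)$, and I define $\mathcal{A}:=\langle \mathbf{R}\pi_\ast E_i : E_i\notin\mathcal{C}\rangle$ with $Z_\mathcal{A}$ given by the prescribed nonzero values, a genuine stability function on $\mathcal{K}u(X)$.

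It remains to establish $(2)$, namely that $\mathbf{R}\pi_\ast$ sends $\widetilde{\mathcal{A}}$ into $\mathcal{A}$ and is therefore exact on hearts. Since both hearts are extension-closures of exceptional collections and $\mathbf{R}\pi_\ast$ is triangulated and annihilates $\mathcal{C}$, it suffices to check on generators: $\mathbf{R}\pi_\ast E_i$ equals $0$ for $E_i\in\mathcal{C}$ and lies in $\mathcal{A}$ otherwise, with no stray cohomology in other degrees. Here I would use the explicit geometry of $\pi$ and the projective-bundle description of $\widetilde{X}$ to compute each $\mathbf{R}\pi_\ast E_i$ and verify that it is concentrated in the heart. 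Granting heart-to-heart, exactness of $\mathbf{R}\pi_\ast\colon\widetilde{\mathcal{A}}\to\mathcal{A}$ is automatic, and a short argument shows that Harder--Narasimhan filtrations for $\sigma_{\widetilde{\mathcal{D}}'}$ descend to those for $\sigma_{\mathcal{K}u(X)}$; the support property passes to $\mathcal{K}u(X)$ because the degenerate directions have been quotiented away, so the descended datum is a genuine Bridgeland stability condition.

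The main obstacle will be this heart-to-heart compatibility: controlling $\mathbf{R}\pi_\ast$ on each exceptional object and ruling out higher pushforward cohomology, which hinges on the precise form of the collection extracted from the two semiorthogonal decompositions and on choosing the central-charge values so that the kernel $\mathcal{C}$ coincides exactly with the zero-locus of $Z_{\widetilde{\mathcal{A}}}$ while the remaining phases stay in range. A secondary point is to arrange the quadratic form witnessing the support property to be compatible with $\pi_\ast$, so that it descends alongside the central charge; since $\mathrm{mod}\text{-}A$ has finite length this ultimately reduces to a finite check on the simples $E_i$.
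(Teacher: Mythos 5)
There is a genuine gap, and it sits exactly at the point you defer as ``the main obstacle.'' Your plan assumes that the kernel $\mathcal{C}=\ker(\mathbf{R}\pi_\ast)$ is spanned by a subset of the exceptional objects $E_i$, so that you can keep the extension-closure heart of the Ext-exceptional collection and simply set $Z_{\widetilde{\mathcal{A}}}(E_i)=0$ on the kernel simples. In this situation that is impossible: by Theorem \ref{KS24} the kernel is generated by a single $3$-spherical object $K=\mathscr{E}[1]$, a spherical object is never exceptional, and none of the three simples $\mathcal{O}_{\widetilde{X}}(-h),\mathscr{G},\mathscr{F}[-2]$ of the exceptional heart $\mathcal{B}$ lies in $\mathcal{C}$ (all three have nonzero pushforward). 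Worse, the heart-to-heart verification you postpone would fail outright: the paper observes that $\mathbf{R}\pi_\ast(\mathcal{O}_{\widetilde{X}}(-h)) = \mathbf{R}\pi_\ast(\mathscr{F}[-2])[1]$, so the images of two simples of $\mathcal{B}$ differ by a shift and can never lie in a common heart --- no weak stability condition with heart $\mathcal{B}$ descends, whatever central-charge values you assign. The missing idea is the tilt: one tilts $\mathcal{B}$ at the torsion pair whose free part is $[\mathscr{F}[-2]]$, so that the tilted heart $\widetilde{\mathcal{A}}=[\mathscr{F}[-1],\mathscr{E}[-2],\mathscr{G}]$ of Lemma \ref{tildeA} acquires the spherical kernel generator $\mathscr{E}[-2]$ (the universal extension of $\mathcal{O}_{\widetilde{X}}(-h)$ by $\mathscr{F}[-2]$) as a simple object; descent of this heart along the Verdier localization then follows from the abstract criterion of Lemma \ref{heartdescent} (the kernel is generated by an object of the heart), not from degree-by-degree pushforward computations of the $\mathbf{R}\pi_\ast E_i$.

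A secondary but real defect concerns the central charge: even granting a good heart, the prescription ``$Z=0$ exactly on kernel simples, arbitrary $\mathbb{H}$-values on the rest'' does not make $Z_{\widetilde{\mathcal{A}}}$ factor through $\pi_\ast$. Here $\ker(\pi_\ast\colon \mathrm{K_{num}}(\widetilde{\mathcal{D}}')\to \mathrm{K_{num}}(\mathcal{K}u(X)))$ has rank $2$, generated by $[\mathscr{E}]$ and $[\mathscr{F}]+[\mathscr{G}]$ (Lemma \ref{kernel}); the second generator is a combination of two simples with \emph{nonzero} pushforward, so factoring forces the additional linear relation $Z_{\widetilde{\mathcal{A}}}(\mathscr{F}[-1]) = Z_{\widetilde{\mathcal{A}}}(\mathscr{G})$ on top of $Z_{\widetilde{\mathcal{A}}}(\mathscr{E}[-2])=0$. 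The paper makes exactly these two choices and then checks that $Z_{\mathcal{A}}$ is a genuine stability function: if $Z_{\widetilde{\mathcal{A}}}(F)=0$ for $F\in\widetilde{\mathcal{A}}$, then $F$ is an iterated extension of $\mathscr{E}[-2]$ and dies under $\mathbf{R}\pi_\ast$. Finally, your worry about engineering a $\pi_\ast$-compatible quadratic form is unnecessary: since $\mathrm{K_{num}}(\widetilde{\mathcal{D}}')/\ker(\pi_\ast)\simeq \mathrm{K_{num}}(\mathcal{K}u(X))\simeq\mathbb{Z}$, the support property is trivial on a rank-one lattice.
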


In order to construct stability conditions on the derived category of a singular threefold,
it is natural to first work on an appropriate ``resolution''.
For a nodal quadric threefold $X$, Kuznetsov's theory of categorical resolutions \cite{Kuz08b}
together with the construction of Kuznetsov--Shinder \cite{KS24} produce an admissible subcategory
$\widetilde{\mathcal{D}}\subset\mathrm{D^b}(\widetilde{X})$, 
equipped with a functor 
$\mathbf{R}\pi_\ast \colon \widetilde{\mathcal{D}} \longrightarrow \mathrm{D^b}(X)$ which is a Verdier localization,
with kernel generated by a single spherical object.

The category $\widetilde{\mathcal{D}}$ admits an explicit description thanks to the projective bundle structure of $\widetilde{X}$,
and it will serve as the starting point of our construction.

First, we exploit two natural semiorthogonal decompositions of $\mathrm{D^b}(\widetilde{X})$. 
The first arises from Orlov’s projective bundle formula for $\tilde{X}\cong \mathbb{P}_E(\mathcal{O}_E \oplus \mathcal{O}_E(-1,-1))$,
which yields a semiorthogonal decomposition of $\mathrm{D^b}(\widetilde{X})$ given by eight exceptional line bundles.
The second decomposition comes from Kuznetsov’s categorical resolution of the nodal quadric.

The key point is that, by performing a sequence of mutations relating these two decompositions,
we exhibit a full Ext-exceptional collection of length $6$ in the categorical resolution 
$\widetilde{\mathcal{D}}$ of $\mathrm{D^b}(X)$,
which greatly simplifies the construction of stability conditions.

More precisely, we have:
\begin{theorem}
The categorical resolution  $\widetilde{\mathcal{D}}$ of $\mathrm{D^b}(X)$ admits a full Ext-exceptional collection of length $6$.
\end{theorem}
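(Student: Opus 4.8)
The plan is to obtain the collection by confronting the two semiorthogonal decompositions of $\mathrm{D^b}(\widetilde X)$ and transporting one into the other by mutations. First I would set up both decompositions explicitly. Write $p\colon\widetilde X\to E$ for the bundle projection, $\xi=\mathcal O_p(1)$ for the relative hyperplane class, and fix the standard full exceptional collection $\langle\mathcal O_E,\mathcal O_E(1,0),\mathcal O_E(0,1),\mathcal O_E(1,1)\rangle$ on $E\cong\mathbb P^1\times\mathbb P^1$. Orlov's projective bundle formula for $\widetilde X\cong\mathbb P_E(\mathcal O_E\oplus\mathcal O_E(-1,-1))$ then exhibits the eight line bundles $p^\ast\mathcal O_E(a,b)$ and $p^\ast\mathcal O_E(a,b)\otimes\xi$, with $(a,b)\in\{(0,0),(1,0),(0,1),(1,1)\}$, as a full exceptional collection. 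On the other hand, Kuznetsov's categorical resolution yields a decomposition of $\mathrm{D^b}(\widetilde X)$ that refines to $\widetilde{\mathcal D}=\langle\widetilde{\mathcal D}',L_1,L_2\rangle$, with $L_1,L_2$ pullback line bundles, such that $\mathbf R\pi_\ast$ carries $\widetilde{\mathcal D}'$ onto $\mathcal Ku(X)$ with kernel the single spherical object.

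The next step is to connect the two collections. Since $L_1$ and $L_2$ occur among (or are built from) the eight line bundles, I would first reorder the Orlov collection so that $L_1,L_2$ sit in the positions dictated by the categorical-resolution decomposition, and then apply the mutation functors---each computed by the usual mutation triangles as the cone of the relevant evaluation or coevaluation morphism---to move the remaining six generators across $L_1$ and $L_2$. Peeling off $L_1$ and $L_2$ from the mutated collection leaves three exceptional objects $F_1,F_2,F_3$ with $\widetilde{\mathcal D}'=\langle F_1,F_2,F_3\rangle$; each $F_i$ is to be tracked throughout as an explicit complex of line bundles (together with objects supported on $E$) on $\widetilde X$.

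It then remains to verify exceptionality and the Ext-positivity. Exceptionality of $(F_1,F_2,F_3)$ and the computation of the graded spaces $\Hom^\bullet(F_i,F_j)$ reduce, via $\mathbf Rp_\ast$ and the projection formula, to the cohomology of line bundles on $\mathbb P^1\times\mathbb P^1$, which is completely explicit by Künneth. Knowing the degrees in which these $\Hom$'s are supported, I would shift the objects, $F_i\mapsto F_i[a_i]$, so as to arrange $\Hom^{\le 0}(F_i[a_i],F_j[a_j])=0$ for all $i<j$. Because the collection is directed and the relevant graded $\Hom$'s are bounded below, such a consistent choice of shifts $a_1,a_2,a_3$ always exists, and the result is the desired full Ext-exceptional collection of length $3$. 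By the standard criterion for Ext-exceptional collections, its extension closure is then a heart $\widetilde{\mathcal A}$ of a bounded t-structure on $\widetilde{\mathcal D}'$, which is the input used to build $\sigma_{\widetilde{\mathcal D}'}$.

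I expect the main obstacle to be the mutation bookkeeping of the second step together with the correct handling of the Verdier localization. One must check that the three surviving objects generate $\widetilde{\mathcal D}'$ \emph{exactly}, and not a neighbouring admissible subcategory; this requires keeping careful track of the spherical object generating $\ker\mathbf R\pi_\ast$, since it is precisely what separates $\widetilde{\mathcal D}'$ from a naive pullback of $\mathcal Ku(X)$ and governs how the collection descends under $\mathbf R\pi_\ast$. By comparison, once the three objects are pinned down, the exceptionality computations and the choice of shifts are routine.
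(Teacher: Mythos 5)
Your overall route is the paper's: play Orlov's eight-line-bundle decomposition of $\mathrm{D^b}(\widetilde{X})$ against Kuznetsov's resolution decomposition, extract the three survivors by mutation, compute the graded $\Hom$'s by pushing down to $E\simeq\mathbb{P}^1\times\mathbb{P}^1$, and then shift to achieve Ext-positivity. Your last step is sound: for a finite directed collection with bounded $\Hom$'s a consistent choice of shifts always exists, and indeed the paper realizes it as $\{\mathcal{O}_{\widetilde{X}}(-h),\,\mathscr{G},\,\mathscr{F}[-2]\}$. But the middle step, where all the work lies, is set up incorrectly, and the error is not a typo: your counts do not close. Since $X$ has index $3$ (so $K_X=-3H$ and $\mathrm{D^b}(X)=\langle\mathcal{K}u(X),\mathcal{O}_X,\mathcal{O}_X(H),\mathcal{O}_X(2H)\rangle$), the resolution decomposition refines to $\mathrm{D^b}(\widetilde{X})=\langle\mathcal{O}_E(-2h-k),\,\mathcal{O}_E(-h-k),\,\widetilde{\mathcal{D}}',\,\mathcal{O}_{\widetilde{X}},\,\mathcal{O}_{\widetilde{X}}(H),\,\mathcal{O}_{\widetilde{X}}(2H)\rangle$: \emph{three} pullback line bundles, not two, plus \emph{two} exceptional torsion sheaves supported on $E$ coming from the dual Lefschetz decomposition of $\mathrm{D^b}(E)$. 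Only after removing all five does the count work: $8-3-2=3$, matching $\mathrm{K_{num}}(\widetilde{\mathcal{D}}')\simeq\mathbb{Z}^3$. With your $\widetilde{\mathcal{D}}=\langle\widetilde{\mathcal{D}}',L_1,L_2\rangle$ the complement would have numerical rank $4$, and your own sentence is internally inconsistent: moving ``the remaining six generators'' across $L_1,L_2$ and then peeling those two off leaves six objects, not three.

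The omitted $E$-supported components are not a side issue; they are the missing idea, because they are precisely what makes the comparison nontrivial. The Orlov collection consists entirely of line bundles while the resolution decomposition contains torsion sheaves, so no amount of transporting line bundles across $L_1,L_2$ can match the two sides. The crux of the paper's proof is a short list of mutation identities that exchange the two types of object: the complete orthogonality of $\mathcal{O}_E(-h)$ with $\mathcal{O}_{\widetilde{X}}(iH)$ for $i=0,1,2$; the computations $\mathbf{L}_{\mathcal{O}_{\widetilde{X}}(2H)}\mathcal{O}_E=\mathcal{O}_{\widetilde{X}}(H+h+k)[1]$ and $\mathbf{L}_{\mathcal{O}_{\widetilde{X}}(H)}\mathcal{O}_E=\mathcal{O}_{\widetilde{X}}(h+k)[1]$, which use $E=H-h-k$; and, most importantly, $\mathbf{L}_{\mathcal{O}_{\widetilde{X}}(-k)}\mathbf{L}_{\mathcal{O}_{\widetilde{X}}}\mathcal{O}_{\widetilde{X}}(H-h)=\mathcal{O}_E(-h)$, which identifies a double mutation of a line bundle with a torsion sheaf and is what lets the $E$-supported blocks cancel against Orlov generators. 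This also settles the worry you rightly flag at the end (that the survivors generate $\widetilde{\mathcal{D}}'$ exactly): the paper arranges both decompositions to share the identical right-hand block $\langle\mathcal{O}_{\widetilde{X}},\mathcal{O}_{\widetilde{X}}(h+k),\mathcal{O}_{\widetilde{X}}(H),\mathcal{O}_{\widetilde{X}}(H+h+k)\rangle$, so the left orthogonals coincide on the nose, giving $\langle\widetilde{\mathcal{D}}',\mathcal{O}_E(-h)\rangle=\langle\mathcal{O}_{\widetilde{X}}(-h),\mathcal{O}_{\widetilde{X}}(-k),\mathbf{L}_{\mathcal{O}_{\widetilde{X}}}\mathcal{O}_{\widetilde{X}}(H-h),\mathbf{L}_{\mathcal{O}_{\widetilde{X}}}\mathcal{O}_{\widetilde{X}}(H-k)\rangle$, after which $\mathcal{O}_E(-h)$ is peeled off by the displayed identity. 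Once you restore the third line bundle and the two torsion sheaves and supply these identities, the remainder of your outline (exceptionality and $\Hom$'s via pushforward to $E$ and Künneth, then the shift) completes the proof along the paper's lines.
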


Their extension closure immediately gives a heart $\mathcal{B}$ of bounded $t$-structure.
Nevertheless, it can be seen that no weak stability condition with heart $\mathcal{B}$ descends to $\mathrm{D^b}(X)$,
and we therefore find a suitable tilt to construct a new heart $\widetilde{\mathcal{A}}\subset \widetilde{\mathcal{D}}$,
so that its pushforward $\mathcal{A}:=\mathbf{R}\pi_\ast(\widetilde{\mathcal{A}})$ is a heart on $\mathrm{D^b}(X)$.
We can then define a localization-compatible central charge $Z_{\widetilde{\mathcal{A}}}$ 
and induce a stability condition  $\sigma_{\mathcal{A}}=(Z_{\mathcal{A}},\mathcal{A})$ on $\mathrm{D^b}(X)$.

The intermediate subcategory $\widetilde{\mathcal{D}}'$ appearing in the construction is also of independent interest: it is the categorical resolution of the Kuznetsov component $\mathcal{K}u(X)$.
Introduced by Kuznetsov in his study of semiorthogonal decompositions of Fano threefolds \cite{Kuz04,Kuz05,Kuz09}, such components have played a central role in the study of their derived categories and geometry.

Stability conditions on Kuznetsov components have also been extensively studied.
In the smooth case, they were constructed in \cite{BLMS} for Fano threefolds of Picard rank one and cubic fourfolds.
In our singular setting, the construction above can also be restricted to yield the corresponding statement for $\mathcal{K}u(X)$.

\subsection{Related work}
Our result fits into recent developments on Bridgeland stability conditions and moduli on Kuznetsov components of Fano varieties.

In the smooth case, stability conditions on Fano threefolds of Picard rank one and their Kuznetsov components are constructed in \cite{Li19} and \cite{BLMS}, respectively.

On the singular side, Kuznetsov and Shinder introduced the notion of categorical absorption of singularities and constructed, under natural hypotheses,
categorical absorptions for projective varieties with isolated ordinary double points \cite{KS24}.
They also studied the derived categories of 1-nodal prime Fano threefolds in \cite{KS25}.

Independently, in \cite{CGL+} the authors studied kernels of categorical resolutions of nodal singularities and proved that for nodal varieties the kernel is generated by a single spherical object, with applications to nodal cubic fourfolds.

Our weak stability condition on the categorical resolution also fits into the framework of partial compactifications of stability spaces by massless objects developed in \cite{BPPW22}. In their terminology it can be viewed as a lax stability condition lying on a boundary stratum,
with massless subcategory corresponding to the  kernel of the Verdier localization.

Finally, in our previous work \cite{Cho25} we constructed Bridgeland stability conditions on a singular surface and its resolution,
and the present paper can be viewed as a threefold counterpart.

\subsection{Organization of the paper}
In Section 2 we review background on stability conditions, hearts of $t$-structures, and tilting. 

Section 3 is devoted to the geometry of the nodal quadric threefold $X$ and its resolution $\widetilde{X}$; we describe the projective bundle structure of $\widetilde{X}$ and derive a semiorthogonal decomposition of $\mathrm{D^b}(\widetilde{X})$ by Orlov's formula. 

In Section 4, we introduce Kuznetsov’s categorical resolution $\widetilde{\mathcal{D}}$ of $\mathrm{D^b}(X)$ and isolate the admissible subcategory $\widetilde{\mathcal{D}}'$ which resolves the Kuznetsov component $\mathcal{K}u(X)$. We then perform a sequence of mutations, using the projective bundle decomposition from Section 3, to exhibit a full exceptional triple generating $\widetilde{\mathcal{D}}'$. 

In Section 5, we use this exceptional collection to construct a new heart $\widetilde{\mathcal{A}}$ on $\widetilde{\mathcal{D}}$ and prove that it descends to a heart $\mathcal{A}$ on $\mathrm{D^b}(X)$.
We also carry out the construction of the stability condition on $\mathrm{D^b}(X)$ and prove the main result.

Throughout the paper, we work over the complex numbers $\mathbb{C}$.

\section*{Acknowledgements}

The author would like to express his sincere gratitude to Arend Bayer for his invaluable help and many stimulating discussions throughout the development of this work.

Most of the mathematical results in this paper were obtained while the author was supported by the ERC Consolidator Grant Wall-CrossAG, no. 819864.
Some further results and the preparation of the manuscript were supported by the National Science and Technology Council of Taiwan under grant No. 114-2639-M-002-006-ASP.

\section{Preliminaries}
We first review some basic notions.
Throughout this section, let $\mathcal{C}$ be a triangulated category. 
We fix a finite rank lattice $\Lambda$ and a surjective group homomorphism $v\colon {\rm K}(\mathcal{C}) \longrightarrow \Lambda$. 

\begin{definition}
A weak stability function (resp. stability function) on  an abelian category $\mathcal{H}$ is a group homomorphism $Z\colon {\rm K}(\mathcal{H}) \longrightarrow \mathbb{C}$ such that for any nonzero object $E\in \mathcal{H}$, we have $\Im Z(E) \geq 0$ with $\Im Z(E) = 0 \Rightarrow \Re Z(E) \leq 0$ (resp. $ \Re Z(E) < 0$).
\end{definition}

\begin{definition}\label{stability}
A weak stability condition (resp. Bridgeland stability condition) on $\mathcal{C}$ is a pair $\sigma = (Z,\mathcal{H})$ consisting of a group homomorphism (called the central charge of $\sigma$) $Z \colon \Lambda \longrightarrow \mathbb{C}$ and a heart $\mathcal{H}$ of a bounded t-structure on $\mathcal{C}$, such that the following conditions hold:
\begin{enumerate}
    \item[(a)] The composition $Z \circ v \colon {\rm K}(\mathcal{H})={\rm K}(\mathcal{C}) \longrightarrow \Lambda \longrightarrow \mathbb{C}$ is a weak stability function (resp. stability function) on $\mathcal{H}$. This gives a notion of slope: for any $E \in \mathcal{H}$, we set
    \begin{equation*}
    \mu_\sigma(E)=\mu_Z(E):=
    \begin{cases}
    \frac{-\Re Z(E)}{\Im Z(E)}, & \text{if } \Im Z(E)>0, \\
    +\infty, & \text{if } \Im Z(E)=0.
    \end{cases}
    \end{equation*}
    We say that an object $0\neq E \in \mathcal{H}$ is $\sigma$-semistable (resp. $\sigma$-stable) if for every nonzero proper subobject $F \subset E$, we have $\mu_\sigma (F) \leq \mu_\sigma (E)$ (resp. $\mu_\sigma (F) < \mu_\sigma (E)$).
    
    \item[(b)] Any nonzero object $E \in \mathcal{H}$ admits a Harder--Narasimhan filtration in $\sigma$-semistable ones,
    that is, for any $0\neq E \in \mathcal{H}$, there is a filtration 
    $$0=E_0 \subset E_1\subset \dots \subset E_{n-1} \subset E_n=E,$$ such that the quotients $A_i:=E_i/E_{i-1}$ are $\sigma$-semistable (called the HN factors) and we have an inequality of their slopes $\mu_\sigma(A_1)> \dots >\mu_\sigma(A_n)$.
    \item[(c)] (Support property) There exists a quadratic form $Q$ on $\Lambda \otimes \mathbb{R}$ such that $Q|_{\ker Z}$ is negative definite, and $Q(v(E))\geq 0$, for all $\sigma$-semistable $E \in \mathcal{H}$.
\end{enumerate}
%Any pair $\sigma=(Z,\mathcal{H})$ satisfying the conditions (a) and (b) is said to be a weak pre-stability condition (resp. pre-stability condition).
\end{definition}

We now briefly review the theory of torsion pairs and tilting introduced in \cite{HRS96}.

\begin{definition} 
Given an abelian category $\mathcal{H}$, a torsion pair in $\mathcal{H}$ is a pair $(\mathcal{T} , \mathcal{F})$ of full additive subcategories satisfying the following conditions:
\begin{enumerate}
    \item[(1)] $\Hom(\mathcal{T} , \mathcal{F})  = 0$
    \item[(2)] For every $E \in \mathcal{H}$, there exists a short exact sequence
    
$$ 0 \longrightarrow T \longrightarrow  E \longrightarrow  F \longrightarrow 0$$
with $ T \in \mathcal{T}$ and $F \in \mathcal{F}$.
\end{enumerate}
\end{definition}

Note that a torsion pair is completely determined by either its torsion part or its torsion-free part. 
More precisely, if $(\mathcal{T} , \mathcal{F})$ is a torsion pair, then we have $\mathcal{F}=\mathcal{T}^\perp$ and $\mathcal{T}={^\perp\mathcal{F}}$.

If we have a torsion pair in an abelian category, we can obtain a tilted heart. 
More precisely, we have the following result.
\begin{theorem}[{\normalfont\cite{HRS96}}]\label{HRStilt}
Let $\mathcal{H}$ be the heart of a bounded t-structure in $\mathcal{C}$ with a torsion pair $(\mathcal{T}, \mathcal{F})$. 
The category
$$\mathcal{H}^\sharp := \{ E \in \mathcal{C} \mid H^i_{\mathcal{H}}(E) = 0 \ {\rm if} \ i \neq 0, -1, H^0_{\mathcal{H}}(E) \in \mathcal{T} , H^{-1}_{\mathcal{H}}(E) \in  \mathcal{F} \}$$ 
is the heart of a bounded t-structure on $\mathcal{C}$.
\end{theorem}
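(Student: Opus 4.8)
The plan is to construct the tilted bounded $t$-structure directly by prescribing its aisle and co-aisle in terms of the original cohomology functors, to verify the three $t$-structure axioms, and to identify its heart with $\mathcal{H}^\sharp$. Write $(\mathcal{C}^{\le 0},\mathcal{C}^{\ge 0})$ for the given bounded $t$-structure with heart $\mathcal{H}$, with truncation functors $\tau^{\le n},\tau^{\ge n}$ and cohomology functors $H^i_{\mathcal{H}}$, and set
\[
\mathcal{C}^{\sharp,\le 0}:=\{X\mid H^i_{\mathcal{H}}(X)=0\ \text{for}\ i>0,\ H^0_{\mathcal{H}}(X)\in\mathcal{T}\},\qquad
\mathcal{C}^{\sharp,\ge 1}:=\{X\mid H^i_{\mathcal{H}}(X)=0\ \text{for}\ i<0,\ H^0_{\mathcal{H}}(X)\in\mathcal{F}\}.
\]
I would then show that $(\mathcal{C}^{\sharp,\le 0},\mathcal{C}^{\sharp,\ge 1}[1])$ is a bounded $t$-structure and that its heart $\mathcal{C}^{\sharp,\le 0}\cap\mathcal{C}^{\sharp,\ge 1}[1]$ equals the category $\mathcal{H}^\sharp$ of the statement; the latter is immediate from the cohomological descriptions, since the intersection forces $H^i_{\mathcal{H}}=0$ outside $\{-1,0\}$ together with $H^0_{\mathcal{H}}\in\mathcal{T}$ and $H^{-1}_{\mathcal{H}}\in\mathcal{F}$. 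The nesting axiom is also immediate: if $X\in\mathcal{C}^{\sharp,\le 0}$ then $H^i_{\mathcal{H}}(X)=0$ for $i>0$ gives in particular $H^1_{\mathcal{H}}(X)=0\in\mathcal{T}$, so $X\in\mathcal{C}^{\sharp,\le 1}$, and dually for the co-aisle.

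For the orthogonality axiom I would show $\Hom(A,B)=0$ for $A\in\mathcal{C}^{\sharp,\le 0}$ and $B\in\mathcal{C}^{\sharp,\ge 1}$ by reducing along the two original truncation triangles. Applying $\Hom(-,B)$ to the triangle $\tau^{\le -1}A\to A\to H^0_{\mathcal{H}}(A)$ and using $\Hom(\mathcal{C}^{\le -1},\mathcal{C}^{\ge 0})=0$ (note $B\in\mathcal{C}^{\ge 0}$) shows that $\Hom(A,B)$ is a quotient of $\Hom(H^0_{\mathcal{H}}(A),B)$. Applying $\Hom(H^0_{\mathcal{H}}(A),-)$ to the triangle $H^0_{\mathcal{H}}(B)\to B\to\tau^{\ge 1}B$ and using $\Hom(\mathcal{C}^{\le 0},\mathcal{C}^{\ge 1})=0$ shows in turn that this is a quotient of $\Hom(H^0_{\mathcal{H}}(A),H^0_{\mathcal{H}}(B))$. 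Since $H^0_{\mathcal{H}}(A)\in\mathcal{T}$ and $H^0_{\mathcal{H}}(B)\in\mathcal{F}$, this last group is $\Hom_{\mathcal{H}}(\mathcal{T},\mathcal{F})=0$ by the defining property of the torsion pair, so $\Hom(A,B)=0$.

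The existence of truncation triangles is the main step, and the one I expect to be the principal obstacle. Given $X\in\mathcal{C}$, let $0\to T\to H^0_{\mathcal{H}}(X)\to F\to 0$ be the torsion decomposition of $H^0_{\mathcal{H}}(X)$ in $\mathcal{H}$, and define $A$ as the fibre of the composite $\tau^{\le 0}X\to H^0_{\mathcal{H}}(X)\to F$, giving a triangle $A\to\tau^{\le 0}X\to F$. A long-exact-sequence computation in $H^\bullet_{\mathcal{H}}$, using surjectivity of $H^0_{\mathcal{H}}(X)\twoheadrightarrow F$ to kill $H^1_{\mathcal{H}}(A)$, shows $H^i_{\mathcal{H}}(A)=H^i_{\mathcal{H}}(X)$ for $i<0$, $H^0_{\mathcal{H}}(A)=T\in\mathcal{T}$, and $H^i_{\mathcal{H}}(A)=0$ for $i>0$, so $A\in\mathcal{C}^{\sharp,\le 0}$. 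Applying the octahedral axiom to the composite $A\to\tau^{\le 0}X\to X$ produces a triangle $A\to X\to B$ together with an auxiliary triangle $F\to B\to\tau^{\ge 1}X$; a second cohomology computation from the latter yields $H^0_{\mathcal{H}}(B)=F\in\mathcal{F}$, $H^i_{\mathcal{H}}(B)=0$ for $i<0$, and $H^i_{\mathcal{H}}(B)=H^i_{\mathcal{H}}(X)$ for $i>0$, so $B\in\mathcal{C}^{\sharp,\ge 1}$. The delicate points are the bookkeeping in these two long exact sequences and the careful use of the octahedral axiom to identify the cone $B$ via the auxiliary triangle. Finally, boundedness of the tilted $t$-structure follows from that of the original one, since the defining conditions only shift the vanishing range by one; together with the heart identification above, this completes the proof that $\mathcal{H}^\sharp$ is the heart of a bounded $t$-structure.
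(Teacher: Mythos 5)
Your proposal is correct, and it is essentially the classical Happel--Reiten--Smal\o{} argument from \cite{HRS96} itself: the paper states Theorem \ref{HRStilt} with a citation and no proof, and your construction of the aisle $\mathcal{C}^{\sharp,\le 0}$ and co-aisle via cohomological conditions, the orthogonality check through the two truncation triangles reducing to $\Hom_{\mathcal{H}}(\mathcal{T},\mathcal{F})=0$, and the truncation triangle built from the torsion decomposition of $H^0_{\mathcal{H}}(X)$ together with the octahedral axiom is exactly the standard proof. All steps check out (including the long-exact-sequence bookkeeping identifying $H^0_{\mathcal{H}}(A)=T$ and $H^0_{\mathcal{H}}(B)=F$, and the use of boundedness so that membership in $\mathcal{C}^{\le 0}$, $\mathcal{C}^{\ge 0}$ is detected by cohomology), so nothing needs to be added.
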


We also recall (weak) stability conditions arising from full exceptional collections.

\begin{definition}
An exceptional collection $\{E_1, \dots, E_n\}$ of a triangulated category $\mathcal{C}$ is called Ext-exceptional if $\Hom^{\leq 0 }(E_i,E_j)=0$ for all $i \neq j$.
\end{definition}

\begin{lemma}[{\normalfont\cite[Lemma 3.14]{Mac07}}]\label{Mac07}
Let $\{E_1,\ldots,E_n\}$ be a full Ext-exceptional collection on a triangulated category $\mathcal{C}$.
Then $[E_1,\ldots,E_n]$ is the heart of a bounded t-structure in $\mathcal{C}$,
where $[]$ denotes the extension closure.
\end{lemma}

By this lemma, for a full exceptional collection $\{E_1, \dots, E_n\}$ of $\mathcal{C}$, if we can find integers $p_1, \dots , p_n$ such that the exceptional collection $\{E_1[p_1], \dots , E_n[p_n] \}$ is Ext,
which is always possible when $\mathcal C=\mathrm{D^b}(\mathcal{H})$ for some abelian category $\mathcal{H}$,
then the extension closure $\mathcal{A}$ of $\{E_1[p_1], \dots , E_n[p_n] \}$ is a heart of bounded t-structure on $\mathcal{C}$.

Moreover, since such heart $\mathcal{A}$ is of finite length and the simple objects in $\mathcal{A}$ are exactly $E_i[p_i]$,
we can define a stability function (resp. weak stability function) $Z$ by designating the values of $E_i[p_i]$'s in $\mathbb{H}:= \{ z \in \mathbb{C} \mid z = |z| e^{i\pi\phi}, \phi \in (0,1]\}$
(resp. $\mathbb{H}\cup \{0\}$),
and in this case, $(Z,\mathcal{A})$ admits Harder--Narasimhan property automatically.

\section{Geometry of the nodal quadric threefold and its resolution}

In this section, we study the geometry of $X$ and $\widetilde{X}$.
We fix the following notation throughout the paper.

\begin{setup}\label{setup:nodal-quadric}
Let $X \subset \mathbb{P}^4$ be a quadric threefold with a single ordinary double point $x_0$.
Let $\pi \colon \widetilde{X} \longrightarrow X$ be the blow-up at $x_0$,
and let $\epsilon \colon E \simeq \mathbb{P}^1\times\mathbb{P}^1 \hookrightarrow \widetilde{X}$ be the exceptional divisor.

We denote by $H$ the primitive ample divisor such that $K_X = -3H$, 
and by $h,k$ the divisor classes of $\mathcal{O}_E(1,0),\mathcal{O}_E(0,1)$ respectively.
\end{setup}

Note that $X$ is a Fano threefold of Picard rank $1$ and index $3$, with $\mathrm{K_{num}}(X) \simeq \mathbb{Z}^4$ and a semiorthogonal decomposition
\begin{equation*}
\mathrm{D^b}(X) = \langle
\mathcal{K}u(X),
\mathcal{O}_X, 
\mathcal{O}_X(H),
\mathcal{O}_X(2H) \rangle.
\end{equation*}
This in particular implies that $\mathrm{K_{num}}(\mathcal{K}u(X)) \simeq \mathbb{Z}$.

We start with the following geometric construction.

\begin{proposition}
$\widetilde{X}\simeq \mathbb{P}_E(\mathcal{O}_E\oplus \mathcal{O}_E(-1,-1))$.
\end{proposition}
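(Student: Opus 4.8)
The plan is to exploit the fact that the nodal quadric $X$ is a \emph{projective cone}. After a linear change of coordinates on $\mathbb{P}^4$ I may assume $x_0 = [0:0:0:0:1]$, and since $x_0$ is an ordinary double point the defining quadratic form has rank $4$ and does not involve the last variable; thus $X = \{q(z_0,z_1,z_2,z_3)=0\}$ with $q$ nondegenerate. Consequently $X$ is the projective cone, with vertex $x_0$, over the smooth quadric surface $S := \{q = 0\} \subset \mathbb{P}^3 = \{z_4 = 0\}$. I would then recall that a smooth quadric surface is $S \cong \mathbb{P}^1\times\mathbb{P}^1$ embedded by the Segre map, so that $\mathcal{O}_{\mathbb{P}^3}(1)|_S = \mathcal{O}_S(1,1)$, i.e.\ $\mathcal{O}_{\mathbb{P}^3}(-1)|_S = \mathcal{O}_S(-1,-1)$ in the notation $h,k$ of Setup~\ref{setup:nodal-quadric}.

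Next I would pass to the ambient blow-up. Writing $\mathrm{Bl}_{x_0}\mathbb{P}^4 = \{([z],[t]) : z_i t_j = z_j t_i,\ 0\le i,j\le 3\} \subset \mathbb{P}^4 \times \mathbb{P}^3$, the projection $q$ from $x_0$ identifies it with $\mathbb{P}_{\mathbb{P}^3}(\mathcal{O}\oplus\mathcal{O}(-1))$, the standard description of the blow-up of a point on projective space, under which the exceptional $\mathbb{P}^3$ is a section. Since $x_0\in X$, the blow-up $\widetilde{X} = \mathrm{Bl}_{x_0}X$ of the Setup is the strict transform of $X$ inside $\mathrm{Bl}_{x_0}\mathbb{P}^4$. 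I would then compute this strict transform explicitly: away from the exceptional divisor one has $[z_0:\dots:z_3]=[t_0:\dots:t_3]$, so the pulled-back equation $q(z)=0$ agrees with $q(t)=0$, and taking closures shows that $\widetilde{X}$ is cut out by $q(t) = 0$, that is $\widetilde{X} = q^{-1}(S)$.

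Restricting the projective bundle $\mathbb{P}_{\mathbb{P}^3}(\mathcal{O}\oplus\mathcal{O}(-1))$ over $S\subset\mathbb{P}^3$ then gives $\widetilde{X} = q^{-1}(S) = \mathbb{P}_S\big((\mathcal{O}\oplus\mathcal{O}(-1))|_S\big) = \mathbb{P}_S(\mathcal{O}_S\oplus\mathcal{O}_S(-1,-1))$, and since the exceptional divisor of $\widetilde{X}\to X$ is $\widetilde{X}\cap\{z_0=\dots=z_3=0\} = S\cong\mathbb{P}^1\times\mathbb{P}^1 = E$, this is exactly the claimed description.

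The part requiring the most care is the bookkeeping around the ambient blow-up: one must check that $\mathrm{Bl}_{x_0}X$ really is the strict transform (not the total transform) of $X$, and correctly separate the two on the exceptional locus — on $E$ the total transform contains all of the exceptional $\mathbb{P}^3$, with the extra multiplicity coming from the node, whereas the strict transform meets $E$ precisely in $S$. I would also flag the projectivization convention, but this is harmless here: since $\mathbb{P}(\mathcal{E})\cong\mathbb{P}(\mathcal{E}\otimes L)$, we have $\mathbb{P}_E(\mathcal{O}_E\oplus\mathcal{O}_E(-1,-1))\cong\mathbb{P}_E(\mathcal{O}_E\oplus\mathcal{O}_E(1,1))$, so the isomorphism type of the bundle, and hence the statement, is independent of the chosen convention and of the sign of the twist.
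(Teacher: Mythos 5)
Your proof is correct, but it takes a genuinely different route from the paper's. The paper also uses the projection from the node, but argues intrinsically: the resolved projection $f\colon \widetilde{X}\longrightarrow Z=X\cap\mathbb{P}^3$ exhibits $\widetilde{X}$ as a $\mathbb{P}^1$-bundle over $Z\simeq E$ admitting two \emph{disjoint sections} --- the exceptional divisor $\epsilon\colon E\hookrightarrow\widetilde{X}$ and the copy of $Z\subset X\setminus\Sing(X)$ sitting inside $\widetilde{X}$ --- and then invokes the fact that a $\mathbb{P}^1$-bundle with two disjoint sections is the projectivization of the direct sum of the two corresponding line bundles, here $\mathcal{O}_E$ and $\mathcal{N}_{E/\widetilde{X}}\simeq\mathcal{O}_E(-1,-1)$. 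You instead work with explicit equations: you identify $X$ as the cone with vertex $x_0$ over the smooth quadric $S=\{q=0\}\subset\mathbb{P}^3$, realize $\mathrm{Bl}_{x_0}\mathbb{P}^4$ as $\mathbb{P}_{\mathbb{P}^3}(\mathcal{O}\oplus\mathcal{O}(-1))$, and compute the strict transform of $X$ to be the restriction of this bundle over $S$. Both arguments are complete; what yours buys is that the rank-two bundle appears directly from the ambient geometry, with no need to compute $\mathcal{N}_{E/\widetilde{X}}$ or to justify the splitting criterion for a $\mathbb{P}^1$-bundle with two disjoint sections (a small lemma the paper uses implicitly), and it makes visible the multiplicity-two behaviour of the total transform at the node. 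What the paper's version buys is coordinate-freeness: it is the style of argument that adapts to other nodal Picard-rank-one Fano threefolds, where no global cone structure is available. The caveats you flag are exactly the right ones and are handled correctly: that $\mathrm{Bl}_{x_0}X$ is the strict (not total) transform inside $\mathrm{Bl}_{x_0}\mathbb{P}^4$ is the standard compatibility of blow-ups with strict transforms, and the projectivization convention is indeed immaterial since $\mathbb{P}(\mathcal{E})\cong\mathbb{P}(\mathcal{E}\otimes L)$. The only cosmetic blemish is your overloading of the symbol $q$ for both the quadratic form and the bundle projection.
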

\begin{proof}
We first choose a hyperplane $\mathbb{P}^3 \subseteq \mathbb{P}^4$ which does not contain the singularity $x_0$ and consider the projection through $x_0$.
This is a birational map $X \dashrightarrow \mathbb{P}^3$ which is resolved by the blow-up at $x_0$ and induces a map $f \colon \widetilde{X} \longrightarrow \mathbb{P}^3$. We denote its image by $Z = f(\widetilde{X}) = X \cap \mathbb{P}^3 \subseteq \mathbb{P}^3$,
and $\widetilde{X}$ is a $\mathbb{P}^1$-bundle over $Z$.

On the other hand, the exceptional divisor $E$ is isomorphic to $Z$ via the composition ($E \longrightarrow \widetilde{X} \longrightarrow Z$) of $\epsilon$ and $f$. 

In summary, the morphism $\rho:=(f\epsilon)^{-1}f  \colon (\widetilde{X} \longrightarrow Z \simeq E)$ is a $\mathbb{P}^1$-bundle, and both the composition ($E \simeq Z \hookrightarrow X \setminus \Sing(X) \hookrightarrow \widetilde{X}$)
and the inclusion ($\epsilon \colon E \longrightarrow \widetilde{X}$) give sections of $\rho$; 
moreover, these two sections are disjoint.

This implies that $\widetilde{X}$ is the projectification of the direct sum of the two line bundles corresponding to these two sections respectively, that is, $\mathcal{O}_E$ and $\mathcal{N}_{E/\widetilde{X}} \simeq \mathcal{O}_E(-1,-1)$.
\end{proof}

Note that $ \pi^\ast \mathcal{O}_X(1)$ is isomorphic to the relative $\mathcal{O}(1)$ of $\rho$,
and hence by abusing the notation we denote by $H$ both the divisor on $X$ generating $\Pic(X)$ and that on $\widetilde{X}$ corresponding to the relative $\mathcal{O}(1)$ of $\rho$,
and by $h,k$ the class of $\mathcal{O}_E(1,0),\mathcal{O}_E(0,1)$ and their pullbacks to $\widetilde{X}$ respectively. 

We now recall Orlov's well-known projective bundle formula.
\begin{theorem}\cite{Orl92}\label{Orlov}
Let $M$ be a smooth projective variety, and let $V$ be a vector bundle of rank $r$
on $M$. 
Denote by $\rho \colon \mathbb{P}_M(V) \longrightarrow M$
the associated projective bundle, and by $\mathcal{O}_{\mathbb{P}_M(V)}(1)$ its
relative $\mathcal{O}(1)$.
Then there is a complete semiorthogonal decomposition of $D^b(\mathbb{P}_M(V))$:
\[
\mathrm{D^b}(\mathbb{P}_M(V))=
\langle \rho^\ast \mathrm{D^b}(M), \rho^\ast \mathrm{D^b}(M)\otimes\mathcal{O}_{\mathbb{P}_M(V)}(1),\dots,\rho^\ast \mathrm{D^b}(M)\otimes\mathcal{O}_{\mathbb{P}_M(V)}(r-1)\rangle.
\]
\end{theorem}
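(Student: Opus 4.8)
The plan is to establish the three properties that together give the asserted semiorthogonal decomposition: that each functor $\Phi_i \colon \mathrm{D^b}(M) \longrightarrow \mathrm{D^b}(\mathbb{P}_M(V))$, $F \mapsto \rho^\ast F \otimes \mathcal{O}(i)$, is fully faithful; that the resulting subcategories $\mathcal{A}_i := \Phi_i(\mathrm{D^b}(M))$ are semiorthogonal in the stated order; and that they generate $\mathrm{D^b}(\mathbb{P}_M(V))$. The uniform geometric input is that $\rho$ is a smooth proper morphism with fibers $\mathbb{P}^{r-1}$, so by cohomology and base change the computation of $\mathbf{R}\rho_\ast$ on a relative line bundle $\mathcal{O}(m)$ is governed fiberwise by $H^\bullet(\mathbb{P}^{r-1}, \mathcal{O}(m))$; in particular $\mathbf{R}\rho_\ast \mathcal{O}_{\mathbb{P}_M(V)} \simeq \mathcal{O}_M$. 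Full faithfulness of $\Phi_i$ is then formal: twisting by $\mathcal{O}(i)$ is an equivalence, so $\Hom^\bullet(\Phi_i F, \Phi_i G) = \Hom^\bullet(\rho^\ast F, \rho^\ast G)$, and adjunction together with the projection formula identifies this with $\Hom^\bullet_M(F, G \otimes \mathbf{R}\rho_\ast \mathcal{O}) = \Hom^\bullet_M(F, G)$. Since $M$ and $\mathbb{P}_M(V)$ are smooth and projective, each $\mathcal{A}_i$ is moreover admissible.

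For semiorthogonality I would check that $\Hom^\bullet(\Phi_i F, \Phi_j G) = 0$ whenever $i > j$ with $0 \le i, j \le r-1$. The same adjunction and projection formula reduce this Hom-complex to $\Hom^\bullet_M(F, G \otimes \mathbf{R}\rho_\ast \mathcal{O}(j-i))$, and for $1 \le i - j \le r-1$ the fiberwise vanishing $H^\bullet(\mathbb{P}^{r-1}, \mathcal{O}(j-i)) = 0$ gives $\mathbf{R}\rho_\ast \mathcal{O}(j-i) = 0$. Hence the whole complex vanishes, which is exactly the semiorthogonality required for the ordering $\langle \mathcal{A}_0, \dots, \mathcal{A}_{r-1}\rangle$.

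The main obstacle is generation, and I would prove it by showing the right orthogonal $\langle \mathcal{A}_0, \dots, \mathcal{A}_{r-1}\rangle^\perp$ is zero; since the $\mathcal{A}_i$ are admissible and semiorthogonal, this is equivalent to completeness of the decomposition. An object $E$ lies in this orthogonal precisely when $\Hom^\bullet(\rho^\ast F \otimes \mathcal{O}(i), E) = 0$ for all $F \in \mathrm{D^b}(M)$ and all $0 \le i \le r-1$, which by adjunction is the condition $\mathbf{R}\rho_\ast(E \otimes \mathcal{O}(-i)) = 0$ for $0 \le i \le r-1$. To conclude $E = 0$ I would invoke the relative Beilinson resolution: the relative Euler sequence on $\mathbb{P}_M(V)$ yields a Koszul resolution of the structure sheaf of the relative diagonal inside $\mathbb{P}_M(V) \times_M \mathbb{P}_M(V)$, with terms $\mathcal{O}(-p) \boxtimes_M \Omega^p_\rho(p)$ for $0 \le p \le r-1$. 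Convolving $E$ against this kernel produces a spectral sequence converging to $E$ whose $E_1$-page is made up of the objects $\rho^\ast \mathbf{R}\rho_\ast(E \otimes \mathcal{O}(-p)) \otimes \Omega^p_\rho(p)$; under the orthogonality hypothesis every one of these vanishes, forcing $E = 0$.

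I expect the genuine work to be concentrated in this last step. The full-faithfulness and semiorthogonality parts are immediate once $\mathbf{R}\rho_\ast$ of a relative line bundle is understood, whereas generation requires constructing the relative Beilinson resolution of the diagonal, verifying that the Koszul complex of the relative Euler section really resolves it, and managing the spectral-sequence bookkeeping. A further point of care is to fix once and for all the convention for $\mathbb{P}_M(V)$ (projectivization of lines versus of rank-one quotients), since this shifts every twist that appears in the statement and in the resolution. A seemingly cheaper alternative is to argue locally, using $\mathbb{P}_M(V)|_U \cong U \times \mathbb{P}^{r-1}$ over a trivializing open $U \subseteq M$ together with the absolute Beilinson decomposition on the second factor; but assembling these local pictures into a single global semiorthogonal decomposition still routes through essentially the same relative resolution, so I regard the spectral-sequence argument as the genuine core of the proof.
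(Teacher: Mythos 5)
This statement is quoted background (Theorem \ref{Orlov} is cited from \cite{Orl92}), so the paper contains no proof of its own; your proposal correctly reconstructs Orlov's standard argument, with full faithfulness and semiorthogonality reduced via adjunction and the projection formula to $\mathbf{R}\rho_\ast\mathcal{O}(m)$ for $1-r\le m\le 0$, and generation proved by convolving against the relative Beilinson--Koszul resolution of the diagonal in $\mathbb{P}_M(V)\times_M\mathbb{P}_M(V)$. You also correctly identify the genuine work (the resolution of the diagonal and the convention for $\mathbb{P}_M(V)$), so the proposal matches the cited proof in both substance and emphasis.
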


Applying Theorem \ref{Orlov} to the projective bundle $\rho \colon \widetilde{X}=\mathbb{P}_E(\mathcal{O}_E\oplus \mathcal{O}_E(-1,-1)) \longrightarrow E$,
we obtain the following result.
\begin{lemma}\label{O1}
There is a semiorthogonal decomposition
\begin{equation}\label{SOD1}
\mathrm{D^b}(\widetilde{X}) = \langle \mathcal{O}_{\widetilde{X}}, \mathcal{O}_{\widetilde{X}}(h),
\mathcal{O}_{\widetilde{X}}(k),
\mathcal{O}_{\widetilde{X}}(h+k),
\mathcal{O}_{\widetilde{X}}(H),
\mathcal{O}_{\widetilde{X}}(H+h),
\mathcal{O}_{\widetilde{X}}(H+k),
\mathcal{O}_{\widetilde{X}}(H+h+k) \rangle
\end{equation}

In particular, we have $\mathrm{K_{num}}(\widetilde{X})\simeq \mathbb{Z}^8$.
\end{lemma}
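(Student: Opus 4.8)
The plan is to reduce the statement to a direct application of Orlov's projective bundle formula (Theorem~\ref{Orlov}) to $\rho\colon \widetilde{X}=\mathbb{P}_E(\mathcal{O}_E\oplus\mathcal{O}_E(-1,-1))\longrightarrow E$, where $E\simeq\mathbb{P}^1\times\mathbb{P}^1$. Since the defining bundle $V=\mathcal{O}_E\oplus\mathcal{O}_E(-1,-1)$ has rank $r=2$, Orlov's theorem yields the two-block semiorthogonal decomposition
\begin{equation*}
\mathrm{D^b}(\widetilde{X})=\langle \rho^\ast\mathrm{D^b}(E),\ \rho^\ast\mathrm{D^b}(E)\otimes\mathcal{O}_{\mathbb{P}_E(V)}(1)\rangle.
\end{equation*}
By the normalization fixed in the remark following the previous proposition, the relative $\mathcal{O}(1)$ is exactly $\mathcal{O}_{\widetilde{X}}(H)$, so the second block is $\rho^\ast\mathrm{D^b}(E)\otimes\mathcal{O}_{\widetilde{X}}(H)$. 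It therefore suffices to expand each block using a full exceptional collection on $E$ and to pull it back along $\rho$.

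First I would record the standard full exceptional collection on $E\simeq\mathbb{P}^1\times\mathbb{P}^1$. Taking the external (Künneth) product of Beilinson's collection $\langle\mathcal{O},\mathcal{O}(1)\rangle$ on each factor gives
\begin{equation*}
\mathrm{D^b}(E)=\langle\mathcal{O}_E,\ \mathcal{O}_E(h),\ \mathcal{O}_E(k),\ \mathcal{O}_E(h+k)\rangle,
\end{equation*}
which is full because the external product of full exceptional collections is full, and semiorthogonal in the displayed order by the Künneth formula: each required $\Ext^\bullet$ reduces to a tensor product of cohomology groups on the two $\mathbb{P}^1$ factors in which at least one factor is $H^\bullet(\mathbb{P}^1,\mathcal{O}(-1))=0$.

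Then I would pull this collection back along $\rho$, using $\rho^\ast\mathcal{O}_E(ah+bk)=\mathcal{O}_{\widetilde{X}}(ah+bk)$, so that the first block becomes $\langle\mathcal{O}_{\widetilde{X}},\mathcal{O}_{\widetilde{X}}(h),\mathcal{O}_{\widetilde{X}}(k),\mathcal{O}_{\widetilde{X}}(h+k)\rangle$, and twisting by $\mathcal{O}_{\widetilde{X}}(H)$ gives the second block $\langle\mathcal{O}_{\widetilde{X}}(H),\mathcal{O}_{\widetilde{X}}(H+h),\mathcal{O}_{\widetilde{X}}(H+k),\mathcal{O}_{\widetilde{X}}(H+h+k)\rangle$. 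Concatenating the two blocks in Orlov's order produces the length-eight collection \eqref{SOD1}. For the final assertion, a full exceptional collection on a smooth projective variety provides a $\mathbb{Z}$-basis of $\mathrm{K}_0$ whose Euler (Gram) matrix is unitriangular, hence unimodular; consequently the numerical and ordinary Grothendieck groups coincide, and $\mathrm{K_{num}}(\widetilde{X})\simeq\mathbb{Z}^8$.

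I expect no serious obstacle here: the content is essentially the bookkeeping of matching the relative hyperplane twist with $\mathcal{O}_{\widetilde{X}}(H)$ and of keeping the within-block ordering compatible with Orlov's between-block ordering. The one point deserving care is the verification that the pulled-back collection on $E$ remains semiorthogonal in the chosen linear order, which, as noted, follows from the Künneth decomposition of the relevant $\Ext$-groups.
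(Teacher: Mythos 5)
Your proposal is correct and follows exactly the paper's route: the paper derives Lemma~\ref{O1} by applying Orlov's formula (Theorem~\ref{Orlov}) to $\rho\colon \widetilde{X}=\mathbb{P}_E(\mathcal{O}_E\oplus\mathcal{O}_E(-1,-1))\longrightarrow E$ with the relative $\mathcal{O}(1)$ identified with $\mathcal{O}_{\widetilde{X}}(H)$, leaving implicit the standard collection $\langle\mathcal{O}_E,\mathcal{O}_E(h),\mathcal{O}_E(k),\mathcal{O}_E(h+k)\rangle$ on $\mathbb{P}^1\times\mathbb{P}^1$ that you spell out. Your added verifications (K\"unneth vanishing for the within-block ordering and the unitriangular Euler matrix giving $\mathrm{K_{num}}(\widetilde{X})\simeq\mathbb{Z}^8$) are accurate fillings-in of details the paper omits.
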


We will also need to compute the Serre functor on the triangulated category $\mathrm{D^b}(\widetilde{X})$ in order to compare semiorthogonal decompositions later.

\begin{lemma}\label{Serre}
\begin{enumerate}
    \item The Serre functor $S_{\widetilde{X}}$ on $\mathrm{D^b}(\widetilde{X})$ is given by $S_{\widetilde{X}}(\cdot) = \cdot \otimes \mathcal{O}_{\widetilde{X}}(-2H-h-k)[3]$. 
    \item $E= H-h-k$.
\end{enumerate}
\end{lemma}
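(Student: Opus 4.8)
## Proof Proposal

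The plan is to prove both statements by direct computation on the projective bundle $\rho \colon \widetilde{X} = \mathbb{P}_E(\mathcal{O}_E \oplus \mathcal{O}_E(-1,-1)) \to E$ using the standard formulas for the canonical bundle of a projective bundle, together with the adjunction formula for the exceptional divisor $E$. Both parts reduce to identifying divisor classes in $\Pic(\widetilde{X})$, which is generated by $H$, $h$, $k$ (the relative $\mathcal{O}(1)$ and the two pullbacks from $E \simeq \mathbb{P}^1 \times \mathbb{P}^1$). For part (1), since $\widetilde{X}$ is smooth and projective of dimension $3$, the Serre functor is $S_{\widetilde{X}}(\cdot) = \cdot \otimes \omega_{\widetilde{X}}[3]$, so everything comes down to computing $\omega_{\widetilde{X}}$.

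For the canonical bundle, I would invoke the relative Euler sequence / the projective bundle canonical formula: for $\rho \colon \mathbb{P}_M(V) \to M$ with $V$ of rank $r$, one has
\begin{equation*}
\omega_{\mathbb{P}_M(V)} \simeq \rho^\ast(\omega_M \otimes \det V) \otimes \mathcal{O}_{\mathbb{P}(V)}(-r).
\end{equation*}
Here $M = E \simeq \mathbb{P}^1 \times \mathbb{P}^1$, so $\omega_E = \mathcal{O}_E(-2,-2)$, corresponding to the class $-2h - 2k$; the bundle is $V = \mathcal{O}_E \oplus \mathcal{O}_E(-1,-1)$, so $\det V = \mathcal{O}_E(-1,-1)$, giving the class $-h - k$; and $r = 2$ with $\mathcal{O}_{\mathbb{P}(V)}(1)$ corresponding to $H$. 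Substituting yields $\omega_{\widetilde{X}} = \mathcal{O}_{\widetilde{X}}(-2H - 3h - 3k)$ — so before concluding I must reconcile this with the claimed $-2H - h - k$. The discrepancy signals that the correct normalization must account for how $H = \pi^\ast\mathcal{O}_X(1)$ relates to the tautological $\mathcal{O}_{\mathbb{P}(V)}(1)$; the honest computation requires pinning down which twist of the relative $\mathcal{O}(1)$ is identified with $H$, since the paper sets $H$ equal to the relative $\mathcal{O}(1)$ only up to the choice fixing the section classes.

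The cleaner route, which I expect to be the actual argument, is to use the blow-up description directly: $\pi \colon \widetilde{X} \to X$ is the blow-up of the $3$-fold $X$ at the node $x_0$ with exceptional divisor $E$, so
\begin{equation*}
K_{\widetilde{X}} = \pi^\ast K_X + a E
\end{equation*}
for the appropriate discrepancy $a$, and since $K_X = -3H$ we get $K_{\widetilde{X}} = -3H + aE$. This is where part (2), the identity $E = H - h - k$, becomes the linchpin: once $E$ is expressed in the basis $\{H, h, k\}$, substituting into the blow-up formula with the correct discrepancy must reproduce $-2H - h - k$. To establish $E = H - h - k$ I would restrict line bundles to $E$ and use $\mathcal{O}_{\widetilde{X}}(E)|_E = \mathcal{N}_{E/\widetilde{X}} = \mathcal{O}_E(-1,-1)$ together with $\mathcal{O}_{\widetilde{X}}(H)|_E = \mathcal{O}_E(1,1)$ (the relative hyperplane restricts to the $(1,1)$-class on the quadric surface $E$), which forces $E \cdot h = E \cdot k$ and pins the $H$-coefficient to $1$; comparing intersection numbers against the generators $h$, $k$ then determines the remaining coefficients uniquely. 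The main obstacle will be carefully tracking the normalization conventions for $\mathcal{O}_{\mathbb{P}(V)}(1)$ versus $H = \pi^\ast\mathcal{O}_X(1)$ and verifying the restriction $\mathcal{O}_{\widetilde{X}}(H)|_E = \mathcal{O}_E(1,1)$, since a sign or twist error there propagates into both parts; once those restrictions are fixed, parts (1) and (2) follow from each other by linear algebra in $\Pic(\widetilde{X}) \otimes \mathbb{Q}$.
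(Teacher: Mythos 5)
Your overall strategy --- establish (ii) first by pinning down the class of $E$ in $\Pic(\widetilde{X})$ via restrictions and fiber degrees, then deduce (i) from the blow-up formula $K_{\widetilde{X}} = \pi^\ast K_X + aE$ --- is a genuinely different (and in principle viable) route from the paper, which goes the other way: it computes $\omega_{\widetilde{X}} = \mathcal{O}_{\widetilde{X}}(-2H-h-k)$ directly from the relative tangent sequence $0 \to \mathcal{T}_\rho \to \mathcal{T}_{\widetilde{X}} \to \rho^\ast\mathcal{T}_E \to 0$ and only then extracts $E = K_{\widetilde{X}} - \pi^\ast K_X = H-h-k$ by adjunction. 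However, your key input is false: $\mathcal{O}_{\widetilde{X}}(H)|_E$ is \emph{trivial}, not $\mathcal{O}_E(1,1)$. Indeed $H = \pi^\ast\mathcal{O}_X(1)$ and $\pi$ contracts $E$ to the point $x_0$, so every pullback from $X$ restricts trivially to $E$. The $(1,1)$-restriction you assert holds on the \emph{other} section of $\rho$, namely $Z = X \cap \mathbb{P}^3$ at infinity (the section with normal bundle $\mathcal{O}(1,1)$); the exceptional divisor is the section with normal bundle $\mathcal{O}(-1,-1)$. Your proposed linear algebra then fails concretely: writing $E = \alpha H + \beta h + \gamma k$, intersecting with a $\rho$-fiber $f$ gives $\alpha = 1$ (since $E\cdot f = H\cdot f = 1$, $h\cdot f = k\cdot f = 0$), and restricting to $E$ with your claimed $H|_E = h+k$ yields $-h-k = (h+k) + \beta h + \gamma k$, i.e.\ $E = H - 2h - 2k$ and hence $K_{\widetilde{X}} = -2H-2h-2k$, both wrong; with the correct trivial restriction one gets $\beta = \gamma = -1$ and the stated answer. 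This is precisely the same normalization issue behind your unresolved discrepancy $-2H-3h-3k$ in the first computation: $H$ is \emph{not} the tautological class of $\mathbb{P}_E(\mathcal{O}_E\oplus\mathcal{O}_E(-1,-1))$ but differs from it by $\rho^\ast\mathcal{O}_E(1,1)$ (equivalently, $H$ is the relative $\mathcal{O}(1)$ for the twisted presentation $\mathbb{P}_E(\mathcal{O}_E\oplus\mathcal{O}_E(1,1))$, whose determinant $h+k$ substituted into your bundle formula recovers $-2H-h-k$). You flagged this as the main obstacle but left it unresolved, so as written the proof does not close.

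A secondary gap: you never determine the discrepancy $a$, invoking only ``the correct discrepancy.'' It equals $1$; this is standard for the blow-up of a threefold ordinary double point, and also follows from adjunction without knowing $\omega_{\widetilde{X}}$, since $-2h-2k = K_E = (\pi^\ast K_X + (a+1)E)|_E = (a+1)(-h-k)$, using $\pi^\ast K_X|_E = 0$ (trivial restriction of a pullback again) and $E|_E = -h-k$. Supplying this computation, together with the corrected restriction $H|_E = \mathcal{O}_E$, would make your (2)-first route complete and logically independent of the paper's argument.
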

\begin{proof}
For (i), as $\widetilde{X}$ is smooth, we only need to compute its canonical divisor $K_{\widetilde{X}}$.

Since $\widetilde{X} \longrightarrow E$ is a projective bundle, we can compute the relative tangent sheaf,
and hence one can also use the following short exact sequence to obtain the canonical divisor $K_{\widetilde{X}}$. 
$$ 0 \longrightarrow \mathcal{T}_\rho \longrightarrow \mathcal{T}_{\widetilde{X}} \longrightarrow \rho^\ast \mathcal{T}_{E} \longrightarrow 0$$

Taking the determinant line bundles of the short exact sequence gives us that 
\begin{equation*} 
\begin{split}
\omega_{\widetilde{X}} = (\bigwedge^3 \mathcal{T}_{\widetilde{X}})^\vee &\simeq (\bigwedge^1 \mathcal{T}_\rho)^\vee \otimes (\bigwedge^2 \rho^\ast \mathcal{T}_{E})^\vee\\ 
&= [\bigwedge^2 (\mathcal{O}_{\widetilde{X}} \otimes \rho^\ast (\mathcal{O}_E \oplus \mathcal{O}_E(-1,-1)))]^\vee \otimes \rho^\ast \omega_E\\
&= [\mathcal{O}_{\widetilde{X}}(2H) \otimes (\bigwedge^2 \rho^\ast(\mathcal{O}_E \oplus \mathcal{O}_E(-1,-1)))]^\vee \otimes \mathcal{O}_{\widetilde{X}}(-2h-2k)\\
&= \mathcal{O}_{\widetilde{X}}(-2H-h-k).
\end{split}
\end{equation*}

For (ii), we write $K_{\widetilde{X}} = \pi^\ast K_X +aE$,
then by adjunction formula, we obtain that
\[(\pi^\ast K_X + aE +E)|_E = K_E = -2h-2k.\]

Since $\pi^\ast K_X = -3H$ is trivial on $E$ and $E|_E = -h-k$, we conclude that $a=1$, 
and therefore $E =K_{\widetilde{X}} - \pi^\ast K_X = -2H-h-k -(-3H) = H-h-k$.
\end{proof}

\section{Categorical resolution and its full exceptional collection}

We now turn to Kuznetsov's categorical resolution of $X$,
which is a component of $\mathrm{D^b}(\widetilde{X})$. 
We will compare the semiorthogonal decomposition produced by categorical resolution with the one given in Lemma \ref{O1},
and work out a full exceptional collection of the categorical resolution of $\mathcal{K}u(X)$.

In \cite{Kuz08b}, Kuznetsov proved the following result for the resolution $\pi \colon \widetilde{X} \longrightarrow X$.

\begin{theorem}\cite[Theorem 4.4]{Kuz08b}
Let $\pi : \widetilde{X} \to X$ be a resolution of rational singularities.
Assume that $E \subset \widetilde{X}$ is the exceptional locus of $\pi$,
$Z = \pi(E)$, and $p = \pi|_E : E \to Z$. 
Let $\epsilon : E \hookrightarrow \widetilde{X}$ denote the inclusion.

Suppose that we have a dual Lefschetz decomposition
\[
  \mathrm{D^b}(E)=
  \langle
    \mathcal{A}_{m-1}(1-m),\,
    \mathcal{A}_{m-2}(2-m),\,
    \dots,\,
    \mathcal{A}_{1}(-1),\,
    \mathcal{A}_{0}
  \rangle
\]
with respect to the line bundle
$\mathcal{O}_{E}(1) = \mathcal{N}^{*}_{E/\widetilde{X}}$,
and that the subcategory
$\mathcal{A}_{0} \subset \mathrm{D^b}(E)$
contains $p^{*}(\mathrm{D^{perf}}(Z))$.
Define
$\widetilde{\mathcal{D}}:=
\{ F \in \mathrm{D^b}(\widetilde{X}) | \epsilon^\ast F \in \mathcal{A}_{0} \}
\subseteq \mathrm{D^b}(\widetilde{X}).$

Then we have a semiorthogonal decomposition
\[
  \mathrm{D^b}(\widetilde{X})
  =
  \langle
    \epsilon_\ast(\mathcal{A}_{m-1}(1-m)),
    \epsilon_\ast(\mathcal{A}_{m-2}(2-m)),
    \dots,
    \epsilon_\ast(\mathcal{A}_{1}(-1)),
    \widetilde{\mathcal{D}}
  \rangle
\]
and $\widetilde{\mathcal{D}}$ is a categorical resolution of $\mathrm{D^b}(X)$.
\end{theorem}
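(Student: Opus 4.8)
The plan is to obtain the decomposition by transporting the dual Lefschetz decomposition of $\mathrm{D^b}(E)$ to $\mathrm{D^b}(\widetilde{X})$ through the divisorial pushforward $\epsilon_\ast$, and then to check the categorical resolution axioms directly. The single computational input I would use throughout is the self-intersection formula for the divisor $E$: since $\mathcal{O}_{\widetilde{X}}(-E)|_E = \mathcal{N}^\ast_{E/\widetilde{X}} = \mathcal{O}_E(1)$, the Koszul resolution of $\mathcal{O}_E$ restricts to $E$ with vanishing differential, so for every $G \in \mathrm{D^b}(E)$ there is a canonical splitting
\[
\mathbf{L}\epsilon^\ast \epsilon_\ast G \;\cong\; G \,\oplus\, G(1)[1].
\]
Combined with the adjunctions $\epsilon^\ast \dashv \epsilon_\ast \dashv \epsilon^!$ and the identity $\epsilon^!(\,\cdot\,) \cong \mathbf{L}\epsilon^\ast(\,\cdot\,)(-1)[-1]$, this reduces all the relevant $\Hom$-spaces on $\widetilde{X}$ to $\Hom$-spaces on $E$ governed by the dual Lefschetz decomposition and its nesting $\mathcal{A}_{m-1}\subseteq\dots\subseteq\mathcal{A}_0$.

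Setting $\mathcal{D}_E := \langle \epsilon_\ast(\mathcal{A}_{m-1}(1-m)), \dots, \epsilon_\ast(\mathcal{A}_1(-1))\rangle$, the first step I would carry out is semiorthogonality and admissibility. For $A\in\mathcal{A}_i(-i)$ and $B\in\mathcal{A}_j(-j)$ the splitting gives $\Hom^\bullet(\epsilon_\ast A, \epsilon_\ast B)\cong \Hom^\bullet_E(A,B)\oplus \Hom^\bullet_E(A(1),B)[-1]$. When $\epsilon_\ast(\mathcal{A}_i(-i))$ lies to the right of $\epsilon_\ast(\mathcal{A}_j(-j))$, the first summand vanishes by the dual Lefschetz semiorthogonality, and since the nesting places $A(1)$ in $\mathcal{A}_{i-1}(-(i-1))$ — still to the right of $\mathcal{A}_j(-j)$ — so does the second; the same computation with $i=j$ shows each $\epsilon_\ast$ is fully faithful on $\mathcal{A}_i(-i)$. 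As $\epsilon_\ast$ carries both adjoints, each piece, and hence $\mathcal{D}_E$, is admissible in $\mathrm{D^b}(\widetilde{X})$.

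Admissibility of $\mathcal{D}_E$ automatically produces a decomposition $\mathrm{D^b}(\widetilde{X}) = \langle \mathcal{D}_E, \widetilde{\mathcal{D}}\rangle$, and the remaining point is to identify the right-hand factor with $\{F : \mathbf{L}\epsilon^\ast F \in \mathcal{A}_0\}$. Here $\epsilon^\ast\dashv\epsilon_\ast$ gives $\Hom^\bullet(F,\epsilon_\ast A)\cong\Hom^\bullet_E(\mathbf{L}\epsilon^\ast F, A)$, so $F$ lies in the factor precisely when $\mathbf{L}\epsilon^\ast F$ is orthogonal to every $\mathcal{A}_i(-i)$ with $i\geq 1$; by construction this orthogonal is exactly the rightmost term $\mathcal{A}_0$ of the dual Lefschetz decomposition, giving $\widetilde{\mathcal{D}} = \{F : \mathbf{L}\epsilon^\ast F\in\mathcal{A}_0\}$ and the asserted semiorthogonal decomposition.

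It then remains to verify that $\widetilde{\mathcal{D}}$ is a categorical resolution. As an admissible subcategory of the smooth proper category $\mathrm{D^b}(\widetilde{X})$ it is smooth and proper, so I would set $\pi_\ast := \mathbf{R}\pi_\ast|_{\widetilde{\mathcal{D}}}$ and $\pi^\ast := \mathbf{L}\pi^\ast$. Writing $j : Z\hookrightarrow X$ so that $\pi\circ\epsilon = j\circ p$, for $G\in\mathrm{D^{perf}}(X)$ one computes $\mathbf{L}\epsilon^\ast\mathbf{L}\pi^\ast G\cong p^\ast(\mathbf{L}j^\ast G)\in p^\ast\mathrm{D^{perf}}(Z)\subseteq\mathcal{A}_0$, so $\pi^\ast$ indeed factors through $\widetilde{\mathcal{D}}$; the rational-singularities hypothesis gives $\mathbf{R}\pi_\ast\mathcal{O}_{\widetilde{X}}\cong\mathcal{O}_X$, and the projection formula then yields $\pi_\ast\pi^\ast\cong\mathrm{id}$, so that $\pi^\ast$ is fully faithful. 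The step I expect to be the real obstacle is not this formal bookkeeping but the remaining structural claim: that $\mathbf{R}\pi_\ast$ realizes $\mathrm{D^b}(X)$ — the bounded derived category of the \emph{singular} $X$, which strictly contains $\mathrm{D^{perf}}(X)$ — as a Verdier localization of $\widetilde{\mathcal{D}}$. Establishing essential surjectivity onto all of $\mathrm{D^b}(X)$ and controlling the kernel of the localization is where the substance of \cite{Kuz08b} lies, and is the part I would expect to require genuine input beyond the splitting formula and adjunctions above.
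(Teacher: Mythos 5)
This statement is quoted from \cite[Theorem 4.4]{Kuz08b}; the paper you were given does not prove it, so the comparison is with Kuznetsov's original argument, and your reconstruction follows it in all essentials: push forward the pieces of the dual Lefschetz decomposition by $\epsilon_\ast$, reduce all $\Hom$-computations to $E$ via the divisor triangle $G(1)[1] \longrightarrow \mathbf{L}\epsilon^\ast\epsilon_\ast G \longrightarrow G$ together with $\epsilon^\ast \dashv \epsilon_\ast \dashv \epsilon^!$, identify the residual factor as $\{F \mid \mathbf{L}\epsilon^\ast F \in \mathcal{A}_0\}$ by adjunction, and verify the resolution axioms using $\epsilon^\ast\pi^\ast = p^\ast j^\ast$ (this is where the hypothesis $p^\ast(\mathrm{D^{perf}}(Z)) \subseteq \mathcal{A}_0$ enters), the projection formula, and $\mathbf{R}\pi_\ast\mathcal{O}_{\widetilde{X}} \cong \mathcal{O}_X$ from rationality of the singularities. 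Your index bookkeeping with the nesting $\mathcal{A}_{m-1} \subseteq \cdots \subseteq \mathcal{A}_0$ (placing $A(1)$ in $\mathcal{A}_{i-1}(-(i-1))$) is correct, and it gives both the semiorthogonality of the pushed-forward pieces and full faithfulness of $\epsilon_\ast$ on each $\mathcal{A}_i(-i)$.

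Two remarks. First, a minor one: your Koszul argument literally computes $\epsilon_\ast\mathbf{L}\epsilon^\ast\epsilon_\ast G$ (multiplication by the section cutting out $E$ vanishes on the pushforward), which shows the triangle splits after applying $\epsilon_\ast$ but not, on its own, upstairs. The splitting is in fact true for divisors --- the normal bundle $\mathcal{O}_{\widetilde{X}}(E)|_E$ always extends to the first infinitesimal neighborhood, being restricted from $\widetilde{X}$ --- but none of your $\Hom$-computations actually needs it: in every case at least one outer term of the long exact sequence attached to the triangle vanishes, so the unsplit triangle suffices and is the safer tool. Second, and more importantly: your closing paragraph defers, as ``the real obstacle,'' the claim that $\mathbf{R}\pi_\ast$ realizes $\mathrm{D^b}(X)$ as a Verdier localization of $\widetilde{\mathcal{D}}$ with controlled kernel. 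That claim is not part of the statement you were asked to prove. In \cite{Kuz08b}, a categorical resolution of $\mathrm{D^b}(X)$ means precisely the data you have already verified: a smooth (admissible in $\mathrm{D^b}$ of a smooth proper variety) category $\widetilde{\mathcal{D}}$ with $\pi_\ast := \mathbf{R}\pi_\ast|_{\widetilde{\mathcal{D}}}$ and $\pi^\ast := \mathbf{L}\pi^\ast$ landing in $\widetilde{\mathcal{D}}$, adjoint on perfect complexes, with $\pi_\ast\pi^\ast \cong \mathrm{id}$. The Verdier localization statement, together with the identification of its kernel as generated by a single spherical object, is a genuinely later result of Kuznetsov--Shinder, quoted separately in this paper as Theorem \ref{KS24}. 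So your proof is complete for the theorem as stated, and the ``gap'' you flag is a misreading of what the statement asserts rather than a defect of your argument.
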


In our case, the exceptional divisor $E$ is the quadric surface $\mathbb{P}^1 \times \mathbb{P}^1$,
and the conormal bundle $\mathcal{O}_E(-E)$ is just $\mathcal{O}_E(h+k)$.
This immediately induces another semiorthogonal decomposition, given by
\begin{equation*}
\mathrm{D^b}(\widetilde{X}) = \langle \epsilon_\ast \mathcal{O}_E \otimes \mathcal{O}_{\widetilde{X}}(-2h-k), 
\epsilon_\ast \mathcal{O}_E \otimes \mathcal{O}_{\widetilde{X}}(-h-k), \widetilde{\mathcal{D}} \rangle
\end{equation*}
where $\widetilde{\mathcal{D}}$ is the categorical resolution of $X$.
We may further note that  $\pi^\ast({\rm D^{perf}}(X)) \subseteq \widetilde{\mathcal{D}}$.
Pulling back the exceptional collection $\{\mathcal{O}_X, \mathcal{O}_X(H),\mathcal{O}_X(2H)\}$ to $\widetilde{X}$,
we obtain the following semiorthogonal decomposition:
\begin{lemma}\label{K1}
There is a semiorthogonal decomposition
\begin{equation}\label{SOD2}
\mathrm{D^b}(\widetilde{X}) = \langle \mathcal{O}_E(-2h-k), 
\mathcal{O}_E(-h-k), \widetilde{\mathcal{D}}', \mathcal{O}_{\widetilde{X}},
\mathcal{O}_{\widetilde{X}}(H),
\mathcal{O}_{\widetilde{X}}(2H)
\rangle
\end{equation}
where $\widetilde{\mathcal{D}}' := \{A \in \widetilde{\mathcal{D}} \ | \Hom_{\widetilde{\mathcal{D}}}(\mathcal{O}_{\widetilde{X}}(iH),A) = 0 {\rm \ for \ } i=0,1,2\}$ is the semiorthogonal complement.
\end{lemma}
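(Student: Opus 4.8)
The plan is to obtain the six-term decomposition by refining the categorical-resolution decomposition
\[
\mathrm{D^b}(\widetilde{X}) = \langle\, \epsilon_\ast\mathcal{O}_E \otimes \mathcal{O}_{\widetilde{X}}(-2h-k),\ \epsilon_\ast\mathcal{O}_E \otimes \mathcal{O}_{\widetilde{X}}(-h-k),\ \widetilde{\mathcal{D}} \,\rangle
\]
recalled just above. Since refining one component of a semiorthogonal decomposition by a semiorthogonal decomposition of that component again produces a semiorthogonal decomposition, it suffices to prove that the pulled-back triple splits off $\widetilde{\mathcal{D}}$, that is,
\[
\widetilde{\mathcal{D}} = \langle\, \widetilde{\mathcal{D}}',\ \mathcal{O}_{\widetilde{X}},\ \mathcal{O}_{\widetilde{X}}(H),\ \mathcal{O}_{\widetilde{X}}(2H) \,\rangle,
\]
where $\widetilde{\mathcal{D}}'$ is the right orthogonal of $\langle \mathcal{O}_{\widetilde{X}}, \mathcal{O}_{\widetilde{X}}(H), \mathcal{O}_{\widetilde{X}}(2H)\rangle$ inside $\widetilde{\mathcal{D}}$, which matches the definition in the statement. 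Substituting this refinement into the displayed decomposition then yields the claimed SOD.

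Two things must then be checked: that the line bundles $\mathcal{O}_{\widetilde{X}}(iH)$, $i=0,1,2$, lie in $\widetilde{\mathcal{D}}$, and that they form an exceptional collection there. Membership is exactly the inclusion $\pi^\ast(\mathrm{D^{perf}}(X)) \subseteq \widetilde{\mathcal{D}}$ noted above; concretely, $\epsilon^\ast\mathcal{O}_{\widetilde{X}}(iH) = \mathcal{O}_E$ because $H|_E$ is trivial, and $\mathcal{O}_E \in \mathcal{A}_0$. For exceptionality, since $\widetilde{\mathcal{D}}$ is a full subcategory the relevant Hom-groups may be computed in $\mathrm{D^b}(\widetilde{X})$, and there I would transfer them down to $X$ using that $X$ has rational singularities, so that $\mathbf{R}\pi_\ast\mathcal{O}_{\widetilde{X}} = \mathcal{O}_X$. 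Adjunction together with the projection formula then gives
\[
\RHom_{\widetilde{X}}\bigl(\mathcal{O}_{\widetilde{X}}(iH),\, \mathcal{O}_{\widetilde{X}}(jH)\bigr) \simeq \RHom_X\bigl(\mathcal{O}_X(iH),\, \mathcal{O}_X(jH)\bigr),
\]
and the right-hand side reproduces precisely the exceptional pattern of $\{\mathcal{O}_X, \mathcal{O}_X(H), \mathcal{O}_X(2H)\}$ coming from the semiorthogonal decomposition of $\mathrm{D^b}(X)$: it is $\mathbb{C}$ concentrated in degree $0$ for $i=j$ and vanishes for $i>j$. Hence the triple is exceptional in $\widetilde{\mathcal{D}}$.

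A finite exceptional collection in the bounded derived category of a smooth projective variety generates an admissible subcategory, and since this subcategory is contained in the admissible $\widetilde{\mathcal{D}}$, it is admissible in $\widetilde{\mathcal{D}}$ as well. This produces the displayed semiorthogonal decomposition of $\widetilde{\mathcal{D}}$, whose first component is by construction the right orthogonal $\widetilde{\mathcal{D}}'$; substituting into the categorical-resolution decomposition completes the proof. The argument is essentially formal given the earlier results, and I do not expect a genuine obstacle. The one step requiring care is the Ext-transfer across $\pi$: one should not pull back the exceptional collection naively, since $\pi^\ast$ alone need not preserve Hom-vanishing. It is the identity $\mathbf{R}\pi_\ast\pi^\ast \simeq \mathrm{id}$ on $\mathrm{D^{perf}}(X)$, valid precisely because the node is a rational singularity, that guarantees the pulled-back triple reproduces the Ext-pattern of the collection downstairs.
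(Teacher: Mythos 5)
Your proof is correct and follows essentially the same route as the paper, which states Lemma \ref{K1} as an immediate consequence of Kuznetsov's categorical-resolution decomposition together with the inclusion $\pi^\ast(\mathrm{D^{perf}}(X)) \subseteq \widetilde{\mathcal{D}}$ and the pullback of the exceptional collection $\{\mathcal{O}_X, \mathcal{O}_X(H), \mathcal{O}_X(2H)\}$. The details you supply beyond the paper's implicit argument --- the Ext-transfer via $\mathbf{R}\pi_\ast\pi^\ast \simeq \mathrm{id}$ using rationality of the node, and admissibility of the subcategory generated by the pulled-back triple --- are exactly the right justifications and contain no gaps.
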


Looking at the decompositions (\ref{SOD1}) and (\ref{SOD2}), it is natural to hope that the categorical resolution $\widetilde{\mathcal{D}}'$ of $\mathcal{K}u(X)$ admits a $3$-term full exceptional collection.
We are going to prove this by a sequence of mutations.
We can compute the left mutations by computing the global sections of any sheaf on $\widetilde{X}$ by pushing forward to $E\simeq \mathbb{P}^1\times \mathbb{P}^1$ along $\rho$.

\begin{remark}
We know that $\mathrm{D^b}(E)$ admits a rectangular Lefschetz decomposition of length $2$,
and hence by \cite[Corollary 4.8]{Kuz08b}, we see that $\widetilde{\mathcal{D}}$ is a strongly crepant categorical resolution of $X$,
i.e. the relative Serre functor $S_{\widetilde{\mathcal{D}}/ \widetilde{X}}$ is the identity.

In particular, this implies that both the left and right adjoints of $\mathbf{R}\pi_\ast \colon \widetilde{\mathcal{D}} \longrightarrow \mathrm{D^b}(X)$ are given by the pullback $\mathbf{L}\pi^\ast \colon {\rm D^{perf}}(X) \longrightarrow \widetilde{\mathcal{D}}$, i.e. $\pi^!=\mathbf{L}\pi^\ast$.

The same argument shows that the categorical resolutions for other Picard rank one Fano threefolds with a single node are also strongly crepant,
even though in those cases we cannot express $E$ in terms of $H, h$ and $k$.
\end{remark}

Now we can start the computation of mutations.

\begin{proposition}\label{KvsO}
The triangulated subcategory $\widetilde{\mathcal{D}}'$ admits a full exceptional collection of length $3$.
More precisely, we have $\widetilde{\mathcal{D}}'=\langle \mathcal{O}_{\widetilde{X}}(-h),\mathbf{L}_{\mathcal{O}_E(-h)}\mathcal{O}_{\widetilde{X}}(-k),\mathbf{L}_{\mathcal{O}_E(-h)}\mathbf{L}_{\mathcal{O}_{\widetilde{X}}} \mathcal{O}_{\widetilde{X}}(H-k) \rangle$.
\end{proposition}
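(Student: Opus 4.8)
The plan is to start from the two semiorthogonal decompositions \eqref{SOD1} and \eqref{SOD2} of $\mathrm{D^b}(\widetilde{X})$ and relate them by mutations. The decomposition \eqref{SOD2} exhibits $\widetilde{\mathcal{D}}'$ as the complement of $\langle \mathcal{O}_E(-2h-k),\mathcal{O}_E(-h-k)\rangle$ on the left and of $\langle \mathcal{O}_{\widetilde{X}},\mathcal{O}_{\widetilde{X}}(H),\mathcal{O}_{\widetilde{X}}(2H)\rangle$ on the right. My first task is to produce from \eqref{SOD1} a second full list of eight objects whose initial two terms are $\mathcal{O}_E(-2h-k),\mathcal{O}_E(-h-k)$ and whose final three terms are $\mathcal{O}_{\widetilde{X}},\mathcal{O}_{\widetilde{X}}(H),\mathcal{O}_{\widetilde{X}}(2H)$; the three middle terms will then automatically generate $\widetilde{\mathcal{D}}'$, and the claimed collection $\langle \mathcal{O}_{\widetilde{X}}(-h),\,\mathbf{L}_{\mathcal{O}_E(-h)}\mathcal{O}_{\widetilde{X}}(-k),\,\mathbf{L}_{\mathcal{O}_E(-h)}\mathbf{L}_{\mathcal{O}_{\widetilde{X}}}\mathcal{O}_{\widetilde{X}}(H-k)\rangle$ should be exactly what emerges. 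Since $\mathcal{O}_E(-h) = \mathbf{L}_{\mathcal{O}_E(-h)}$-related objects appear, I expect the mutated objects to be honest exceptional objects (either line bundles on $\widetilde{X}$ or pushforwards of line bundles from $E$), so length $3$ and exceptionality will follow once the mutation sequence is identified.

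Concretely, I would first rewrite the eight line bundles in \eqref{SOD1} using Lemma \ref{Serre}(ii), namely $E = H-h-k$, so that $\mathcal{O}_{\widetilde{X}}(H) = \mathcal{O}_{\widetilde{X}}(E+h+k)$ and twists by $H$ can be exchanged for twists involving $E,h,k$; this makes it possible to recognize when a twist of $\mathcal{O}_{\widetilde{X}}$ restricts to the correct line bundle on $E$ and hence when a sheaf supported on $E$ appears after mutation. The main computational engine is that left mutation $\mathbf{L}_{\mathcal{O}_{\widetilde{X}}(D)}F$ sits in the triangle $\mathrm{ev}\colon \RHom(\mathcal{O}_{\widetilde{X}}(D),F)\otimes \mathcal{O}_{\widetilde{X}}(D) \to F \to \mathbf{L}_{\mathcal{O}_{\widetilde{X}}(D)}F$, and the graded vector space $\RHom(\mathcal{O}_{\widetilde{X}}(D),F) = \mathbf{R}\Gamma(\widetilde{X},F\otimes\mathcal{O}_{\widetilde{X}}(-D))$ is computed by pushing forward along $\rho$ to $E\simeq\mathbb{P}^1\times\mathbb{P}^1$, exactly as the paragraph before the proposition indicates. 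I would assemble a small table of the relevant cohomologies $H^\bullet(\widetilde{X},\mathcal{O}_{\widetilde{X}}(aH+bh+ck))$ via $\rho_\ast\mathcal{O}_{\widetilde{X}}(aH) = \mathrm{Sym}^a(\mathcal{O}_E\oplus\mathcal{O}_E(-1,-1))$ (for $a\geq 0$) and Künneth on $\mathbb{P}^1\times\mathbb{P}^1$, which supplies every Hom and Ext needed to evaluate the mutations and to verify the vanishings defining $\widetilde{\mathcal{D}}'$.

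The step I expect to be the genuine obstacle is bookkeeping the correct \emph{sequence} of elementary mutations that transports the ordered collection \eqref{SOD1} into one beginning with the two $E$-supported objects and ending with $\mathcal{O}_{\widetilde{X}},\mathcal{O}_{\widetilde{X}}(H),\mathcal{O}_{\widetilde{X}}(2H)$, while keeping track of how each line bundle transforms. In particular I must move the objects $\mathcal{O}_{\widetilde{X}},\mathcal{O}_{\widetilde{X}}(H)$ (and their $h,k$-twisted companions) past one another using the standard rule that $\mathbf{L}_A$ applied across an orthogonal pair leaves it unchanged but across a non-orthogonal pair produces the cone, and I must confirm at each stage that the two designated $E$-objects $\mathcal{O}_E(-2h-k),\mathcal{O}_E(-h-k)$ really are the mutated images of the appropriate line-bundle twists — this is where the identity $E=H-h-k$ and the projection-formula cohomology computations interlock. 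Once the full eight-term mutated collection is matched term-by-term with \eqref{SOD2}, the three middle terms are forced to generate $\widetilde{\mathcal{D}}'$, and reading them off gives precisely the stated triple; a final check that $\mathbf{L}_{\mathcal{O}_E(-h)}\mathcal{O}_{\widetilde{X}}(-k)$ and $\mathbf{L}_{\mathcal{O}_E(-h)}\mathbf{L}_{\mathcal{O}_{\widetilde{X}}}\mathcal{O}_{\widetilde{X}}(H-k)$ are exceptional and mutually semiorthogonal completes the argument.
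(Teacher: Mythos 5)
Your overall strategy is the paper's: relate \eqref{SOD1} and \eqref{SOD2} by mutations, compute all the needed $\RHom$'s by pushing forward along $\rho$ to $E\simeq\mathbb{P}^1\times\mathbb{P}^1$, use $E=H-h-k$, and read off $\widetilde{\mathcal{D}}'$ as the forced middle block once the outer blocks of the two decompositions match. However, there are two genuine problems. First, your computational engine is mis-stated by a dualization: since $H$ is both the relative $\mathcal{O}(1)$ of $\rho$ and $\pi^\ast\mathcal{O}_X(1)$, we must have $h^0(\widetilde{X},\mathcal{O}_{\widetilde{X}}(H))=h^0(X,\mathcal{O}_X(H))=5$, which forces $\rho_\ast\mathcal{O}_{\widetilde{X}}(aH)=\mathrm{Sym}^a(\mathcal{O}_E\oplus\mathcal{O}_E(1,1))$, not $\mathrm{Sym}^a(\mathcal{O}_E\oplus\mathcal{O}_E(-1,-1))$ as you wrote (which would give $h^0(\mathcal{O}_{\widetilde{X}}(H))=1$). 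This is not cosmetic: for instance $\mathbf{R}\Gamma(\widetilde{X},\mathcal{O}_{\widetilde{X}}(E))=\mathbf{R}\Gamma(\widetilde{X},\mathcal{O}_{\widetilde{X}}(H-h-k))\simeq\mathbb{C}$ in degree $0$, which is exactly what makes the mutation exchanging $\mathcal{O}_{\widetilde{X}}(2H)$ with an $E$-supported object work, whereas your table would return $\mathbb{C}[-2]$ here and derail every such step.

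Second, the part you defer as ``bookkeeping'' is in fact the entire proof, and your proposed normalization --- mutating \eqref{SOD1} into a collection beginning with $\mathcal{O}_E(-2h-k),\mathcal{O}_E(-h-k)$ and ending with $\mathcal{O}_{\widetilde{X}},\mathcal{O}_{\widetilde{X}}(H),\mathcal{O}_{\widetilde{X}}(2H)$ --- is not the frame the argument actually reaches. The paper's missing idea is to first apply the Serre functor of Lemma \ref{Serre}(i) to \eqref{SOD2}, transporting the $E$-block to the far right as $\mathcal{O}_E(-h),\mathcal{O}_E$, and then to normalize \emph{both} decompositions to the common right-hand block $\langle\mathcal{O}_{\widetilde{X}},\mathcal{O}_{\widetilde{X}}(h+k),\mathcal{O}_{\widetilde{X}}(H),\mathcal{O}_{\widetilde{X}}(H+h+k)\rangle$; this is precisely why $\mathcal{O}_E(-h)$ ends up adjacent to $\widetilde{\mathcal{D}}'$ and why the final generators involve $\mathbf{L}_{\mathcal{O}_E(-h)}$. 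Moreover the one genuinely non-formal computation --- the identification $\mathbf{L}_{\mathcal{O}_{\widetilde{X}}(-k)}\mathbf{L}_{\mathcal{O}_{\widetilde{X}}}\mathcal{O}_{\widetilde{X}}(H-k+k-h)$, i.e.\ showing via the evaluation cone that $\mathbf{L}_{\mathcal{O}_{\widetilde{X}}}\mathcal{O}_{\widetilde{X}}(H-h)$ is a two-term complex with $H^{-1}=\mathcal{O}_{\widetilde{X}}(-k)$ and $H^0=\mathcal{O}_E(-h)$, so that mutating once more yields $\mathcal{O}_E(-h)$ --- is nowhere anticipated in your outline, and your expectation that all mutated objects are honest sheaves (line bundles or $E$-pushforwards) is false: two of the three final generators, $\mathbf{L}_{\mathcal{O}_E(-h)}\mathcal{O}_{\widetilde{X}}(-k)$ and $\mathbf{L}_{\mathcal{O}_E(-h)}\mathbf{L}_{\mathcal{O}_{\widetilde{X}}}\mathcal{O}_{\widetilde{X}}(H-k)$, are genuine two-term complexes. (One small simplification in your favor: your closing ``final check'' of exceptionality and semiorthogonality is unnecessary, since mutations of an exceptional collection inside a semiorthogonal decomposition automatically produce an exceptional collection.)
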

\begin{proof}
We divide the proof into steps.

\textbf{Step 1.}
We start with the semiorthogonal decomposition (\ref{SOD2}).
By Lemma \ref{K1} and applying the Serre functor, we get
\begin{equation}\label{SOD3}
\mathrm{D^b}(\widetilde{X}) = 
\langle  \widetilde{\mathcal{D}}', \mathcal{O}_{\widetilde{X}},
\mathcal{O}_{\widetilde{X}}(H),
\mathcal{O}_{\widetilde{X}}(2H),
\mathcal{O}_E(-h),\mathcal{O}_E
\rangle
\end{equation}.

Next, for $i=0,1,2$, we have
$$\RHom_{\widetilde{X}}( \mathcal{O}_{\widetilde{X}}(iH), \mathcal{O}_E(-h)) = \mathbf{R}\Gamma(\widetilde{X},\mathcal{O}_E(-iH-h)) = \mathbf{R}\Gamma (E, \mathcal{O}_E(-1,0)) = 0,$$
and hence we see that $\mathcal{O}_E(-h)$ is completely orthogonal to $\mathcal{O}_{\widetilde{X}}(iH)$.

Mutating $\mathcal{O}_E(-h)$ to the left of these three objects yields
\begin{equation}\label{SOD4}
\mathrm{D^b}(\widetilde{X}) = 
\langle  \widetilde{\mathcal{D}}', \mathcal{O}_E(-h), \mathcal{O}_{\widetilde{X}},
\mathcal{O}_{\widetilde{X}}(H),
\mathcal{O}_{\widetilde{X}}(2H),
\mathcal{O}_E
\rangle
\end{equation}

\textbf{Step 2.} We now consider the left mutation $\mathbf{L}_{\mathcal{O}_{\widetilde{X}}(2H)}\mathcal{O}_E$.

Note that $\RHom_{\widetilde{X}}(\mathcal{O}_{\widetilde{X}}(2H), \mathcal{O}_E) \simeq \mathbf{R}\Gamma(\widetilde{X}, \mathcal{O}_E) \simeq \mathbb{C}$.
Thus, the distinguished triangle defining the left mutation $\mathbf{L}_{\mathcal{O}_{\widetilde{X}}(2H)}\mathcal{O}_E$ is exactly induced by the short exact sequence
$$ 0 \longrightarrow  \mathcal{O}_{\widetilde{X}}(2H-E) \longrightarrow \mathcal{O}_{\widetilde{X}}(2H) \longrightarrow \mathcal{O}_E(2H) \longrightarrow 0$$
and hence $\mathbf{L}_{\mathcal{O}_{\widetilde{X}}(2H)}\mathcal{O}_E = \mathbf{L}_{\mathcal{O}_{\widetilde{X}}(2H)}\mathcal{O}_E(2H)= \mathcal{O}_{\widetilde{X}}(2H-E)[1] =\mathcal{O}_{\widetilde{X}}(H+h+k)[1]$,
where the final equality follows from Lemma \ref{Serre}(ii).

In order to move $\mathcal{O}_{\widetilde{X}}(H+h+k)$ to the far right,
we compute the left mutation 
$\mathbf{L}_{\mathcal{O}_{\widetilde{X}}(H+h+k)}\mathcal{O}_{\widetilde{X}}(2H)$.
For this, we need to compute $$\RHom_{\widetilde{X}} (\mathcal{O}_{\widetilde{X}}(H+h+k), \mathcal{O}_{\widetilde{X}}(2H)) = \mathbf{R}\Gamma(\widetilde{X},\mathcal{O}_{\widetilde{X}}(H-h-k)) = \mathbf{R}\Gamma(\widetilde{X},\mathcal{O}_{\widetilde{X}}(E)) \simeq \mathbb{C},$$

Similarly, the left mutation is defined by the sequence
$$ 0 \longrightarrow  \mathcal{O}_{\widetilde{X}}(H+h+k) \longrightarrow \mathcal{O}_{\widetilde{X}}(H+h+k+E) \longrightarrow \mathcal{O}_E(H+h+k+E) \longrightarrow 0,$$
so $\mathbf{L}_{\mathcal{O}_{\widetilde{X}}(H+h+k)}\mathcal{O}_{\widetilde{X}}(2H) = \mathbf{L}_{\mathcal{O}_{\widetilde{X}}(H+h+k)}\mathcal{O}_{\widetilde{X}}(H+h+k+E) =\mathcal{O}_E(H+h+k+E) = \mathcal{O}_E(2H) = \mathcal{O}_E$.

Now our semiorthogonal decomposition becomes
\begin{align}
\begin{split}
\mathrm{D^b}(\widetilde{X}) 
&= \langle  \widetilde{\mathcal{D}}', \mathcal{O}_E(-h), \mathcal{O}_{\widetilde{X}},
\mathcal{O}_{\widetilde{X}}(H),
\mathcal{O}_{\widetilde{X}}(H+h+k),
\mathcal{O}_{\widetilde{X}}(2H)
\rangle
\\
&=\langle  \widetilde{\mathcal{D}}', \mathcal{O}_E(-h), \mathcal{O}_{\widetilde{X}},
\mathcal{O}_{\widetilde{X}}(H),
\mathcal{O}_E,
\mathcal{O}_{\widetilde{X}}(H+h+k)
\rangle    
\end{split}
\end{align}

Similarly, we have $\mathbf{L}_{\mathcal{O}_{\widetilde{X}}(H)}\mathcal{O}_E = \mathbf{L}_{\mathcal{O}_{\widetilde{X}}(H)}\mathcal{O}_E(H) = \mathcal{O}_{\widetilde{X}}(H-E)[1] =\mathcal{O}_{\widetilde{X}}(h+k)[1]$.
This implies that
\begin{equation}\label{SOD6}
\mathrm{D^b}(\widetilde{X}) = \langle  \widetilde{\mathcal{D}}', \mathcal{O}_E(-h), \mathcal{O}_{\widetilde{X}},
\mathcal{O}_{\widetilde{X}}(h+k),
\mathcal{O}_{\widetilde{X}}(H),
\mathcal{O}_{\widetilde{X}}(H+h+k)
\rangle
\end{equation}

\textbf{Step 3.} On the other hand, starting from the semiorthogonal decomposition (\ref{SOD1}),
we first compute the left mutation $\mathbf{L}_{\mathcal{O}_{\widetilde{X}}}\mathcal{O}_{\widetilde{X}}(h)$.
Note that $\RHom_{\widetilde{X}}(\mathcal{O}_{\widetilde{X}},\mathcal{O}_{\widetilde{X}}(h))=\mathbf{R}\Gamma(\mathbb{P}^1,\mathcal{O}_{\mathbb{P}^1}(1))=\mathbb{C}^2$

Pulling back the sequence $0 \longrightarrow \mathcal{O}(-1) \longrightarrow \mathcal{O}^{\oplus 2} \longrightarrow \mathcal{O}(1) \longrightarrow 0  $ on $\mathbb{P}^1$  to $\widetilde{X}$,
we obtain
$$ 0 \longrightarrow \mathcal{O}_{\widetilde{X}}(-h) \longrightarrow \mathcal{O}_{\widetilde{X}}\otimes \mathbb{C}^2 \longrightarrow \mathcal{O}_{\widetilde{X}}(h) \longrightarrow 0,$$
and hence the cone of the evaluation map $\ev \colon \mathcal{O}_{\widetilde{X}}\otimes \RHom_{\widetilde{X}}(\mathcal{O}_{\widetilde{X}},\mathcal{O}_{\widetilde{X}}(h)) \longrightarrow \mathcal{O}_{\widetilde{X}}(h)$ is isomorphic to $\mathcal{O}_{\widetilde{X}}(-h)[1]$.

It follows that $\mathbf{L}_{\mathcal{O}_{\widetilde{X}}}\mathcal{O}_{\widetilde{X}}(h) = \mathcal{O}_{\widetilde{X}}(-h)[1]$ and similarly $\mathbf{L}_{\mathcal{O}_{\widetilde{X}}}\mathcal{O}_{\widetilde{X}}(k) = \mathcal{O}_{\widetilde{X}}(-k)[1]$.
Tensoring the line bundle $\mathcal{O}_{\widetilde{X}}(H)$,
we also obtain \[\mathbf{L}_{\mathcal{O}_{\widetilde{X}}(H)}\mathcal{O}_{\widetilde{X}}(H+h) = \mathcal{O}_{\widetilde{X}}(H-h)[1] \quad \text{and}
\quad \mathbf{L}_{\mathcal{O}_{\widetilde{X}}(H)}\mathcal{O}_{\widetilde{X}}(H+k) = \mathcal{O}_{\widetilde{X}}(H-k)[1].\] 

Moreover, we observe that $\mathcal{O}_{\widetilde{X}}(H-h),\mathcal{O}_{\widetilde{X}}(H-k)$ and $\mathcal{O}_{\widetilde{X}}(h+k)$ are mutually completely orthogonal.
Indeed, we have 
\begin{align*}
\begin{split}
\RHom_{\widetilde{X}} (\mathcal{O}_{\widetilde{X}}(h+k), \mathcal{O}_{\widetilde{X}}(H-h))&= \mathbf{R}\Gamma(\widetilde{X}, \mathcal{O}_{\widetilde{X}}(H-2h-k))\\
&= \mathbf{R}\Gamma(E, \mathcal{O}_{E}(-2h-k) \oplus \mathcal{O}_E(-h))\\
&=0, 
\end{split}
\end{align*}
and similarly $\RHom_{\widetilde{X}} (\mathcal{O}_{\widetilde{X}}(h+k), \mathcal{O}_{\widetilde{X}}(H-k))=0$.

Therefore, the decomposition (\ref{SOD1}) becomes
\begin{align}\label{SOD7}
\begin{split}
\mathrm{D^b}(\widetilde{X}) &= \langle \mathcal{O}_{\widetilde{X}}(-h), \mathcal{O}_{\widetilde{X}}(-k),
\mathcal{O}_{\widetilde{X}},
\mathcal{O}_{\widetilde{X}}(h+k),
\mathcal{O}_{\widetilde{X}}(H-h),
\mathcal{O}_{\widetilde{X}}(H-k),
\mathcal{O}_{\widetilde{X}}(H),
\mathcal{O}_{\widetilde{X}}(H+h+k) \rangle \\
&= \langle \mathcal{O}_{\widetilde{X}}(-h), \mathcal{O}_{\widetilde{X}}(-k),
\mathcal{O}_{\widetilde{X}},
\mathcal{O}_{\widetilde{X}}(H-h),
\mathcal{O}_{\widetilde{X}}(H-k),
\mathcal{O}_{\widetilde{X}}(h+k),
\mathcal{O}_{\widetilde{X}}(H),
\mathcal{O}_{\widetilde{X}}(H+h+k) \rangle
\end{split}
\end{align}

We focus on the subcategory $\langle \mathcal{O}_{\widetilde{X}},
\mathcal{O}_{\widetilde{X}}(h+k),
\mathcal{O}_{\widetilde{X}}(H), \mathcal{O}_{\widetilde{X}}(H+h+k) \rangle^\perp$.
Comparing (\ref{SOD6}) and (\ref{SOD7}), we deduce that $\langle \widetilde{\mathcal{D}}' , \mathcal{O}_E(-h) \rangle = \langle \mathcal{O}_{\widetilde{X}}(-h), \mathcal{O}_{\widetilde{X}}(-k), 
\mathbf{L}_{\mathcal{O}_{\widetilde{X}}}\mathcal{O}_{\widetilde{X}}(H-h),
\mathbf{L}_{\mathcal{O}_{\widetilde{X}}}\mathcal{O}_{\widetilde{X}}(H-k) \rangle$.

\textbf{Step 4.} We claim that $\mathbf{L}_{\mathcal{O}_{\widetilde{X}}(-k)}\mathbf{L}_{\mathcal{O}_{\widetilde{X}}}\mathcal{O}_{\widetilde{X}}(H-h) = \mathcal{O}_E(-h)$.

To prove this, we note that $\RHom_{\widetilde{X}}(\mathcal{O}_{\widetilde{X}},\mathcal{O}_{\widetilde{X}}(H-h)) = \mathbb{C}^2$.
Consider the following two short exact sequences:
$$ 0 \longrightarrow \mathcal{O}_{\widetilde{X}}(-k) \longrightarrow \mathcal{O}_{\widetilde{X}}\otimes \mathbb{C}^2 \longrightarrow \mathcal{O}_{\widetilde{X}}(k) \longrightarrow 0$$
$$ 0 \longrightarrow \mathcal{O}_{\widetilde{X}}(H-h-E) \longrightarrow \mathcal{O}_{\widetilde{X}}(H-h)  \longrightarrow \mathcal{O}_E(H-h) \longrightarrow 0$$

The evaluation map $\ev \colon \mathcal{O}_{\widetilde{X}}\otimes \mathbb{C}^2 \longrightarrow \mathcal{O}_{\widetilde{X}}(H-h)$ is exactly the composition $$\mathcal{O}_{\widetilde{X}}\otimes \mathbb{C}^2 \longrightarrow \mathcal{O}_{\widetilde{X}}(k) =\mathcal{O}_{\widetilde{X}}(H-h-E) \longrightarrow \mathcal{O}_{\widetilde{X}}(H-h),$$
and hence $\cone(\ev)=\mathbf{L}_{\mathcal{O}_{\widetilde{X}}}\mathcal{O}_{\widetilde{X}}(H-h)$ is given by a two-term complex $G$ with $H^0(G) = \mathcal{O}_E(-h)$ and $H^{-1}(G) = \mathcal{O}_{\widetilde{X}}(-k)$.
In particular, it fits into the distinguished triangle
\begin{equation}\label{DT0}\tag{$\dagger$}
\mathcal{O}_{\widetilde{X}}(-k)[1] \longrightarrow \mathbf{L}_{\mathcal{O}_{\widetilde{X}}}\mathcal{O}_{\widetilde{X}}(H-h) \longrightarrow \mathcal{O}_E(-h).
\end{equation}

Applying the triangulated functor $\mathbf{L}_{\mathcal{O}_{\widetilde{X}}(-k)}$ to $\eqref{DT0}$,
it shows that \[\mathbf{L}_{\mathcal{O}_{\widetilde{X}}(-k)}\mathbf{L}_{\mathcal{O}_{\widetilde{X}}}\mathcal{O}_{\widetilde{X}}(H-h) = \mathbf{L}_{\mathcal{O}_{\widetilde{X}}(-k)}\mathcal{O}_E(-h).\] 

Moreover,
\[
\RHom_{\widetilde{X}}(\mathcal{O}_{\widetilde{X}}(-k),
\mathcal{O}_E(-h))
= \mathbf{R}\Gamma(E,\mathcal{O}_E(-h+k))
= \mathbf{R}\Gamma(E,\mathcal{O}_E(-1,1))=0,
\]
so $\mathbf{L}_{\mathcal{O}_{\widetilde{X}}(-k)}\mathcal{O}_E(-h)
=\mathcal{O}_E(-h)$,
which proves our claim.

Finally, mutating $\mathcal{O}_{\widetilde{X}}(-k)$ and $\mathbf{L}_{\mathcal{O}_{\widetilde{X}}}\mathcal{O}_{\widetilde{X}}(H-k)$ through $\mathcal{O}_E(-h)$ proves the assertion.
\end{proof}

We now use this full exceptional collection to describe the kernel of the induced map between numerical Grothendieck groups.

From now on, we fix the following notations: $\mathscr{F}:=\mathbf{L}_{\mathcal{O}_E(-h)}\mathbf{L}_{\mathcal{O}_{\widetilde{X}}} \mathcal{O}_{\widetilde{X}}(H-k)$,
$\mathscr{G}:=\mathbf{L}_{\mathcal{O}_E(-h)}\mathcal{O}_{\widetilde{X}}(-k)$,
and $\mathscr{E} :=\mathbf{L}_{\mathcal{O}_E(-h)}\mathcal{O}_E(-k)$.

\begin{lemma}\label{kernel}
The kernel of the pushforward $\pi_\ast \colon \mathrm{K_{num}}(\widetilde{\mathcal{D}}) \longrightarrow \mathrm{K_{num}}(X)$ is generated by $[\mathscr{E}]$ and $[\mathscr{G}]+[\mathscr{F}]$.
\end{lemma}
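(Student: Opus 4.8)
The plan is to reduce the statement to three points: a rank count showing $\ker\pi_\ast$ is a saturated sublattice of rank $2$, the containment of both $[\mathscr{E}]$ and $[\mathscr{G}]+[\mathscr{F}]$ in $\ker\pi_\ast$, and the fact that these two classes span a \emph{saturated} rank-$2$ sublattice. The rank count is immediate: Proposition~\ref{KvsO} gives $\mathrm{K_{num}}(\widetilde{\mathcal{D}}')\simeq\mathbb{Z}^3$ (basis $[\mathcal{O}_{\widetilde{X}}(-h)],[\mathscr{G}],[\mathscr{F}]$), while $\mathrm{K_{num}}(\mathcal{K}u(X))\simeq\mathbb{Z}$; as $\mathbf{R}\pi_\ast\colon\widetilde{\mathcal{D}}'\to\mathcal{K}u(X)$ is a Verdier localization it is essentially surjective, so $\pi_\ast$ is a surjection $\mathbb{Z}^3\twoheadrightarrow\mathbb{Z}$ and $\ker\pi_\ast$ is a rank-$2$ direct summand.

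The class $[\mathscr{E}]$ is the easy case. Applying $\mathbf{R}\pi_\ast$ to the defining triangle $\mathcal{O}_E(-h)[-2]\to\mathcal{O}_E(-k)\to\mathscr{E}$ and using $\mathbf{R}\pi_\ast\epsilon_\ast\mathcal{O}_E(a,b)=\mathbf{R}\Gamma(E,\mathcal{O}_E(a,b))\otimes\mathcal{O}_{x_0}$, both terms vanish since $\mathbf{R}\Gamma(E,\mathcal{O}_E(-1,0))=\mathbf{R}\Gamma(E,\mathcal{O}_E(0,-1))=0$; hence $\mathbf{R}\pi_\ast\mathscr{E}=0$ and $[\mathscr{E}]\in\ker\pi_\ast$.

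The remaining input, which serves both the second containment and the saturation, is to express $[\mathscr{E}]$ in the exceptional basis. Since $\langle\mathcal{O}_{\widetilde{X}}(-h),\mathscr{G},\mathscr{F}\rangle$ is an exceptional collection the Euler matrix $\big(\chi(E_i,E_j)\big)$ is upper-triangular unipotent, so the coordinates of $[\mathscr{E}]$ are recovered from the three numbers $\chi(E_i,\mathscr{E})$. I would compute all the needed pairings from the resolution $0\to\mathcal{O}_{\widetilde{X}}(-h-E)\to\mathcal{O}_{\widetilde{X}}(-h)\to\epsilon_\ast\mathcal{O}_E(-h)\to0$, the projective-bundle pushforward $\mathbf{R}\rho_\ast\mathcal{O}_{\widetilde{X}}(cH)=\bigoplus_{j=0}^{c}\mathcal{O}_E(j,j)$ (the normalization $\mathbf{R}\rho_\ast\mathcal{O}_{\widetilde{X}}(H)=\mathcal{O}_E\oplus\mathcal{O}_E(1,1)$ being the one consistent with the mutation computations of Proposition~\ref{KvsO}), and Lemma~\ref{Serre}(ii) to rewrite $E=H-h-k$. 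The expected outcome is the clean identity $[\mathscr{E}]=[\mathcal{O}_{\widetilde{X}}(-h)]+[\mathscr{F}]$, i.e.\ coordinates $(1,0,1)$. Granting it, $[\mathscr{G}]+[\mathscr{F}]=[\mathscr{E}]+\big([\mathscr{G}]-[\mathcal{O}_{\widetilde{X}}(-h)]\big)$; the second summand lies in $\ker\pi_\ast$ because the mutation triangle gives $\mathbf{R}\pi_\ast\mathscr{G}\simeq\mathbf{R}\pi_\ast\mathcal{O}_{\widetilde{X}}(-k)$, which the automorphism of $\widetilde{X}$ over $X$ swapping the two rulings $h,k$ of $E$ identifies with $\mathbf{R}\pi_\ast\mathcal{O}_{\widetilde{X}}(-h)$. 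Thus $[\mathscr{G}]+[\mathscr{F}]\in\ker\pi_\ast$. (A symmetry-free alternative is to check that $[\mathbf{R}\pi_\ast\mathscr{G}]+[\mathbf{R}\pi_\ast\mathscr{F}]$ pairs to $0$ with $\mathcal{O}_X(mH)$ for every $m\ge0$: one finds $\chi(\mathbf{R}\pi_\ast\mathcal{O}_{\widetilde{X}}(h),\mathcal{O}_X(mH))=2\binom{m+2}{3}$ and $\chi(\mathcal{O}_X,\mathcal{O}_X(mH))=\binom{m+4}{4}-\binom{m+2}{4}$, and the combination vanishes identically by Pascal's rule; non-degeneracy of the Euler form then forces the sum to be $0$.)

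Finally, in the basis $\{[\mathcal{O}_{\widetilde{X}}(-h)],[\mathscr{G}],[\mathscr{F}]\}$ the two kernel classes are $(1,0,1)$ and $(0,1,1)$, whose $2\times2$ minor on the first two coordinates is $1$; hence they span a saturated rank-$2$ sublattice, which must coincide with the rank-$2$ summand $\ker\pi_\ast$. I expect the main difficulty to be the pushforward bookkeeping in the third paragraph---in particular checking that the $\mathbf{L}_{\mathcal{O}_{\widetilde{X}}}$-mutation genuinely removes the $\mathcal{O}_X$-summand so that $\mathbf{R}\pi_\ast\mathscr{F}\in\mathcal{K}u(X)$, and pinning down the projective-bundle normalization---whereas the coordinate relation $[\mathscr{E}]=[\mathcal{O}_{\widetilde{X}}(-h)]+[\mathscr{F}]$, once established, makes both the containment and the saturation essentially automatic.
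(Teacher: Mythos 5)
Your proposal is correct in substance and shares the paper's skeleton: a rank count showing $\ker\pi_\ast$ has rank $2$, membership of the two classes via the same distinguished triangles, and coordinates $(1,0,1)$, $(0,1,1)$ in the basis $\{[\mathcal{O}_{\widetilde{X}}(-h)],[\mathscr{G}],[\mathscr{F}]\}$. The differences are in how two steps are justified. First, the paper obtains the relation $[\mathscr{E}]=[\mathcal{O}_{\widetilde{X}}(-h)]+[\mathscr{F}]$ for free from the mutation triangle $\mathcal{O}_{\widetilde{X}}(-h)[1]\to\mathscr{F}\to\mathscr{E}$ (the triangle $(\Delta')$ in the paper, obtained by applying $\mathbf{L}_{\mathcal{O}_E(-h)}$ to the $h\leftrightarrow k$ swap of $(\dagger)$), whereas you propose to recover it by inverting the unipotent Euler matrix of the exceptional collection --- a valid but heavier route, and the one place where your write-up defers a computation rather than performing it; your ``expected outcome'' does match the triangle, and your pairing numbers ($\chi(\widetilde{X},\mathcal{O}(mH-h))=2\binom{m+2}{3}$, etc.) are consistent with the normalization $\mathbf{R}\rho_\ast\mathcal{O}_{\widetilde{X}}(mH)=\bigoplus_{j=0}^m\mathcal{O}_E(j,j)$. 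Second, for the key identity $\pi_\ast[\mathcal{O}_{\widetilde{X}}(-h)]=\pi_\ast[\mathcal{O}_{\widetilde{X}}(-k)]$ the paper simply asserts it, while you supply two justifications (ruling-swap symmetry, and Euler pairings against all $\mathcal{O}_X(mH)$); this is a genuine improvement, as is your explicit saturation check via the unimodular $2\times 2$ minor, where the paper contents itself with ``linear independence'' (which a priori only gives a finite-index sublattice; the index-one verification is what actually closes the argument, and your coordinates make it immediate).

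One slip to repair: there is no nontrivial automorphism of $\widetilde{X}$ \emph{over} $X$ --- such a map would restrict to the identity on $\widetilde{X}\setminus E\simeq X\setminus\{x_0\}$ and hence be the identity. What exists is the lift to $\widetilde{X}=\mathbb{P}_E(\mathcal{O}_E\oplus\mathcal{O}_E(-1,-1))$ of the factor swap of $E=\mathbb{P}^1\times\mathbb{P}^1$ (the bundle is swap-invariant), covering an automorphism of $X$ fixing $x_0$; to conclude equality of \emph{numerical} classes one must additionally know that this automorphism of $X$ acts trivially on $\mathrm{K_{num}}(\mathcal{K}u(X))$, which again reduces to detecting classes by Euler pairings with the $\mathcal{O}_X(mH)$. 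Since your symmetry-free alternative carries out exactly that detection, the gap is cosmetic rather than structural; I would simply make the Euler-pairing argument the primary one.
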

\begin{proof}
Note first that $\ker(\pi_\ast|_{\widetilde{{\mathcal{D}}}})$ coincides with $\ker (\pi_\ast \colon \mathrm{K_{num}}(\widetilde{\mathcal{D}}') \longrightarrow \mathrm{K_{num}}(\mathcal{K}u(X))$
and recall that $\mathrm{K_{num}}(\widetilde{\mathcal{D}}')=\mathbb{Z}^3$ and $\mathrm{K_{num}}(\mathcal{K}u(X))=\mathbb{Z}$,
which implies that the kernel $\ker(\pi_\ast|_{\widetilde{{\mathcal{D}}}})$ has rank $2$.

For $\mathscr{G}$, Serre duality gives 
\begin{equation*}
\begin{split}
\RHom_{\widetilde{X}}(\mathcal{O}_E(-h),\mathcal{O}_{\widetilde{X}}(-k)) &\simeq \RHom_{\widetilde{X}}(\mathcal{O}_{\widetilde{X}}(-k),\mathcal{O}_E(-2h-k-2H))^\vee[-3] \\
&\simeq \mathbf{R}\Gamma(E,\mathcal{O}_E(-2,0))^\vee[-3]\\
&= \mathbb{C}[-2],     
\end{split}
\end{equation*}
and hence $\mathscr{G}$ fits into the distinguished triangle
\begin{equation}\label{DT1}\tag{$\Delta$}
\mathcal{O}_E(-h)[-2] \longrightarrow \mathcal{O}_{\widetilde{X}}(-k) \longrightarrow \mathscr{G}.
\end{equation}

In particular, $\mathscr{G}$ is a two-term complex with 
$H^1(\mathscr{G}) = \mathcal{O}_E(-h)$ and 
$H^0(\mathscr{G}) = \mathcal{O}_{\widetilde{X}}(-k)$.

For $\mathscr{F}$, interchanging $h$ and $k$ in \eqref{DT0},
we obtain a distinguished triangle
$$\mathcal{O}_{\widetilde{X}}(-h)[1] \longrightarrow \mathbf{L}_{\mathcal{O}_{\widetilde{X}}} \mathcal{O}_{\widetilde{X}}(H-k) \longrightarrow \mathcal{O}_E(-k). $$
Applying the triangulated functor $\mathbf{L}_{\mathcal{O}_E(-h)}$ we see that $\mathscr{F}$ fits into
\begin{equation}\label{DT2}\tag{$\Delta'$}
\mathcal{O}_{\widetilde{X}}(-h)[1] \longrightarrow \mathscr{F}\longrightarrow \mathbf{L}_{\mathcal{O}_E(-h)}\mathcal{O}_E(-k).
\end{equation}

Since $\mathbf{R}\pi_\ast(\mathcal{O}_E(-k)) = \mathbf{R}\pi_\ast(\mathcal{O}_E(-h)) = 0$, 
applying $\mathbf{R}\pi_\ast$ to $\eqref{DT1}$ and $\eqref{DT2}$ gives
\[
\mathbf{R}\pi_\ast \mathscr{G} \simeq \mathbf{R}\pi_\ast \mathcal{O}_{\widetilde{X}}(-k),\qquad
\mathbf{R}\pi_\ast \mathscr{F} \simeq \mathbf{R}\pi_\ast \mathcal{O}_{\widetilde{X}}(-h)[1].
\]
Moreover, $[\mathbf{R}\pi_\ast \mathcal{O}_{\widetilde{X}}(-h)] = [\mathbf{R}\pi_\ast \mathcal{O}_{\widetilde{X}}(-k)]$ in 
$\mathrm{K_{num}}(\mathcal{K}u(X))$,
and hence $\pi_\ast([\mathscr{F}] + [\mathscr{G}]) = 0$.

On the other hand, consider the short exact sequence:
\[
0 \longrightarrow \mathcal{O}_{\widetilde X}(-E-h)
  \longrightarrow \mathcal{O}_{\widetilde X}(-h)
  \longrightarrow \mathcal{O}_E(-h)
  \longrightarrow 0.
\]

Applying the functor $\RHom_{\widetilde X}(-,\epsilon_\ast\mathcal{O}_E(-k))$, 
we obtain 
\[
\RHom_{\widetilde X}(\mathcal{O}_E(-h),\mathcal{O}_E(-k))
\longrightarrow
\RHom_{\widetilde X}(\mathcal{O}_{\widetilde X}(-h),\mathcal{O}_E(-k))
\longrightarrow
\RHom_{\widetilde X}(\mathcal{O}_{\widetilde X}(-E-h),\mathcal{O}_E(-k)).
\]

We compute
\[
\RHom_{\widetilde X}(\mathcal{O}_{\widetilde X}(-h),\mathcal{O}_E(-k))
\simeq
\mathbf{R}\Gamma(E,\mathcal{O}_E(h-k))=0,
\]
\[
\RHom_{\widetilde X}(\mathcal{O}_{\widetilde X}(-E-h),\mathcal{O}_E(-k))
\simeq
\mathbf{R}\Gamma(E,\mathcal{O}_E(-2k))\simeq \mathbb{C}[-1].
\]
Hence $\RHom_{\widetilde X}(\mathcal{O}_E(-h),\mathcal{O}_E(-k))
\simeq \mathbb{C}[-2].$

The left mutation of $\mathcal{O}_E(-k)$ through $\mathcal{O}_E(-h)$ fits into a distinguished triangle
\begin{equation}\label{DT3}\tag{$\Delta''$}
\mathcal{O}_E(-h)[-2]
\longrightarrow
\mathcal{O}_E(-k)
\longrightarrow
\mathscr{E}.
\end{equation}
In particular, we have $\pi_\ast([\mathscr{E}]) = \pi_\ast([\mathcal{O}_E(-k)] - [\mathcal{O}_E(-h)]) = 0$.

Linear independence follows from Proposition \ref{KvsO},
which shows that $\{[\mathcal{O}_{\widetilde{X}}(-h)],[\mathscr{G}],[\mathscr{F}]\}$ is a basis of $\mathrm{K_{num}}(\widetilde{\mathcal{D}}')$,
together with the relation $[\mathscr{E}]=[\mathscr{F}]+[\mathcal{O}_{\widetilde{X}}(-h)]$.
\end{proof}

\section{Descent of the heart and construction of stability conditions}
Before constructing a weak stability condition on $\widetilde{\mathcal{D}}$,
we first need a localization-compatible heart of a bounded t-structure.
One approach is to use a full exceptional collection of the categorical resolution.

\begin{lemma}\label{Ext ec}
\begin{enumerate}
    \item The full exceptional collection $\{\mathcal{O}_{\widetilde{X}}(-h), \mathscr{G},\mathscr{F}[-2] \}$ of $\widetilde{\mathcal{D}}'$ is an Ext-exceptional collection.
    \item Moreover, the exceptional collection in (i) can be completed to a full Ext-exceptional collection $\{\mathcal{O}_{\widetilde{X}}(-h), \mathscr{G},\mathscr{F}[-2], \mathcal{O}_{\widetilde{X}}[p_0], \mathcal{O}_{\widetilde{X}}(H)[p_1], \mathcal{O}_{\widetilde{X}}(2H)[p_2]\}$ of $\widetilde{\mathcal{D}}$ for some integers $p_i$.
\end{enumerate}
\end{lemma}
\begin{proof}
Once (i) is proved, (ii) is automatic.

We need to show that $\Hom^i_{\widetilde{X}}(\mathcal{O}_{\widetilde{X}}(-h),\mathscr{G}),\Hom^{i-2}_{\widetilde{X}}(\mathcal{O}_{\widetilde{X}}(-h),\mathscr{F}),\Hom^{i-2}_{\widetilde{X}}(\mathscr{G}, \mathscr{F})$ are zero for all $i\leq0$.

Applying $\RHom_{\widetilde{X}}(\mathcal{O}_{\widetilde{X}}(-h), - )$ to \eqref{DT1},
we obtain that
\[\RHom_{\widetilde{X}}(\mathcal{O}_{\widetilde{X}}(-h), \mathscr{G}) \simeq \RHom_{\widetilde{X}}(\mathcal{O}_{\widetilde{X}}(-h), \mathcal{O}_E(-h)[-1]) \simeq \mathbb{C}[-1].\]

Secondly, applying $\RHom_{\widetilde{X}}(\mathscr{G},- )$ to \eqref{DT2}
shows that 
\[\RHom_{\widetilde{X}}(\mathscr{G}, \mathscr{F}) 
\simeq \RHom_{\widetilde{X}}(\mathscr{G}, \mathscr{E}) 
= \RHom_{\widetilde{X}}(\mathbf{L}_{\mathcal{O}_E(-h)}\mathcal{O}_{\widetilde{X}}(-k), \mathbf{L}_{\mathcal{O}_E(-h)}\mathcal{O}_E(-k)).\]

By the adjunction between left and right mutation, we see that 
\[\RHom_{\widetilde{X}}(\mathscr{G}, \mathscr{F}) \simeq  \RHom_{\widetilde{X}}(\mathbf{R}_{\mathcal{O}_E(-h)}\mathbf{L}_{\mathcal{O}_E(-h)}\mathcal{O}_{\widetilde{X}}(-k), \mathcal{O}_E(-k)) = \RHom_{\widetilde{X}}(\mathcal{O}_{\widetilde{X}}(-k), \mathcal{O}_E(-k)) = \mathbb{C}.\]

Finally, apply $\RHom_{\widetilde{X}}(\mathcal{O}_{\widetilde{X}}(-h), - )$ to \eqref{DT2}.
This induces a distinguished triangle
\begin{equation*}
\mathbb{C}[1] \longrightarrow \RHom_{\widetilde{X}}(\mathcal{O}_{\widetilde{X}}(-h),\mathscr{F}) \longrightarrow \RHom_{\widetilde{X}}(\mathcal{O}_{\widetilde{X}}(-h),\mathscr{E})
\end{equation*}
To compute the third term, we apply $\RHom_{\widetilde{X}}(\mathcal{O}_{\widetilde{X}}(-h), - )$ to the triangle of left mutation $\eqref{DT3}$ and obtain that $\RHom_{\widetilde{X}}(\mathcal{O}_{\widetilde{X}}(-h),\mathscr{E}) \simeq \mathbb{C}[-1]$.

Therefore, $\Hom^{i}_{\widetilde{X}}(\mathcal{O}_{\widetilde{X}}(-h),\mathscr{F}) =
\begin{cases}
    \mathbb{C}, & \text{when } i=1,-1, \\
    0, & \text{otherwise.}
\end{cases}$.
This proves the assertion.
\end{proof}

From now on, we fix a choice of $(p_0,p_1,p_2)$,
and we denote by $\mathcal{B}'$ and $\mathcal{B}$ the extension closure of the three-term and six-term Ext-exceptional collection in \ref{Ext ec} respectively.
By Lemma \ref{Mac07}, $\mathcal{B}'$ and $\mathcal{B}$ are hearts of bounded t-structures on $\widetilde{\mathcal{D}}'$ and $\widetilde{\mathcal{D}}$ respectively.

However, it is impossible to construct a heart $\mathcal{A}$ of bounded t-structure on $\mathrm{D^b}(X)$ such that $\mathbf{R}\pi_\ast (\mathcal{B}) \subseteq \mathcal{A}$ because $\mathbf{R}\pi_\ast (\mathcal{O}_{\widetilde{X}}(-h)) = \mathbf{R}\pi_\ast(\mathscr{F}[-2])[1]$. 

We will then tilt $\mathcal{B}$ with respect to a torsion pair whose torsion-free part contains $\mathscr{F}[-2]$, and show that this will produce a localization-compatible heart.

By Lemma \ref{Ext ec}, we can construct a stability condition simply by defining the central charge $Z_\mathcal{B}$ to be the group homomorphism $\mathrm{K}(\mathcal{B})\longrightarrow \mathbb{C}$ such that $Z_\mathcal{B}(\mathcal{O}_{\widetilde{X}}(-h)) =z_1, Z_\mathcal{B}(\mathscr{G})=z_2$, $Z_\mathcal{B}(\mathscr{F}[-2])=z_3$, and $Z_{\mathcal{B}}(\mathcal{O}_{\widetilde{{X}}}(iH)[p_i])=w_i (i=0,1,2)$ for $z_i, w_i \in \mathbb{H}$.

If we choose $z_3$ to be a complex number sufficiently close to the positive real line,
and $z_1=z_2=w_i$ to be any other complex number with $\arg(z_1)>\arg(z_3)$,
then the Harder--Narasimhan filtration of this stability condition implies that the following pair of subcategories $(\mathcal{T}_\mathcal{B},\mathcal{F}_\mathcal{B})$ is a torsion pair.
$$\mathcal{F}_\mathcal{B} := [ \mathcal{F}[-2] ] $$
$$\mathcal{T}_\mathcal{B} := \{B \in \mathcal{B} \ | \ \Hom_\mathcal{B}(B,\mathscr{F}[-2]) = 0 \} $$

\begin{lemma}\label{tildeA}
The tilted heart $\widetilde{\mathcal{A}}:= \mathcal{B}^\sharp = [\mathcal{F}_\mathcal{B} [1],\mathcal{T}_\mathcal{B} ]$ is the extension closure of the set
\[\{ \mathscr{F}[-1],\mathscr{E}[-2],\mathscr{G}, \mathcal{O}_{\widetilde{X}}[p_0], \mathcal{O}_{\widetilde{X}}(H)[p_1], \mathcal{O}_{\widetilde{X}}(2H)[p_2]\}.\]
\end{lemma}
\begin{proof}

The tilted heart $\widetilde{\mathcal{A}}$ is also of finite length,
and should also have $6$ simple generators, i.e. $\mathscr{F}[-1]$ and the universal extensions of $\mathcal{O}_{\widetilde{X}}(-h)$, $\mathscr{G}$, and $\mathcal{O}_{\widetilde{X}}(iH)[p_i](i=0,1,2)$ by $\mathscr{F}[-2]$.

We then compute these universal extensions.
First, consider the distinguished triangle
$$  \mathscr{F}[-2] \otimes \Ext^1_{\widetilde{X}}(\mathcal{O}_{\widetilde{X}}(-h),\mathscr{F}[-2])^\vee \longrightarrow \mathcal{E}_1 \longrightarrow \mathcal{O}_{\widetilde{X}}(-h).$$
By the computation above we know that $\Ext^1_{\widetilde{X}}(\mathcal{O}_{\widetilde{X}}(-h),\mathscr{F}[-2]) = \mathbb{C}$.
Note that this is exactly the shift of (\ref{DT2}), 
and hence the universal extension $\mathcal{E}_1 = \mathscr{E}[-2]$.

As for the universal extension $\mathcal{E}_2$ of $\mathscr{G}$ by $\mathscr{F}[-2]$, since $\Ext^1_{\widetilde{X}}(\mathscr{G},\mathscr{F}[-2])=0$,
we see that $\mathcal{E}_2 = \mathscr{G}$.

Finally, $\mathscr{F}[-2]$ lies in $\widetilde{\mathcal{D}}'$,
by semiorthogonality we have $\RHom(\mathcal{O}_{\widetilde{X}}(iH), \mathscr{F}[-2])=0$ for $i=0,1,2$.
This means that the universal extensions are $\mathcal{O}_{\widetilde{X}}(iH)[p_i]$ themselves, which proves the assertion.
\end{proof}

Note that $\mathbf{R}\pi_\ast(\widetilde{\mathcal{A}})$ is generated by $\mathbf{R}\pi_\ast \mathcal{O}_{\widetilde{X}}(-h), \mathbf{R}\pi_\ast \mathcal{O}_{\widetilde{X}}(-k)$,
and $\mathcal{O}_X(iH)[p_i]$.
In general, the image of a heart under an exact functor need not be a heart.
However, in our case, we may take advantage of the following result in \cite{KS24} to see that the heart $\widetilde{\mathcal{A}}$ we choose will descend to a heart on $\mathrm{D^b}(X)$.

\begin{theorem}[{\normalfont\cite[Theorem 5.8]{KS24}}]\label{KS24}
Let $X$ be a variety of dimension $n \geq 2$ over an algebraically closed
field $k$ of characteristic not equal to $2$ with an ordinary double point
$x_0$ and no other singularities. Let
\[
  \pi : \widetilde{X} = \operatorname{Bl}_{x_0}(X) \longrightarrow X
\]
be the blowup of the singular point, let
$\epsilon \colon E  \hookrightarrow \widetilde{X}$ be the embedding of the
exceptional divisor over $x_0$, and let $S$ be a spinor bundle on $E$.
Then the subcategory
$\widetilde{\mathcal{D}}:= \{F \in \mathrm{D^b}(\widetilde{X})|\epsilon^\ast F \in \langle S,\mathcal{O}_{E}\rangle\}$
is admissible in $\mathrm{D^b}(\widetilde{X})$. Moreover,
\begin{enumerate}
    \item the induced functor
    $\mathbf{R}\pi_{*} : \widetilde{\mathcal{D}} \to \mathrm{D^b}(X)$
    is a crepant Verdier localization;
    \item the kernel $\ker(\pi_\ast|_{\widetilde{{\mathcal{D}}}})$ is generated by a single spherical object $K \in \widetilde{\mathcal{D}}$;
    \item when $\dim(X)$ is even, $K=\epsilon_\ast S$ is $2$-spherical,
     and when $\dim(X)$ is odd, $K$ is $3$-spherical and fits into the distinguished triangle $K \longrightarrow \epsilon_\ast S \longrightarrow \epsilon_\ast S'[2]$, where $S'$ is another spinor bundle on $E$.
\end{enumerate}
\end{theorem}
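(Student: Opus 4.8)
The plan is to derive the statement from Kuznetsov's general theory of categorical resolutions together with the cohomological properties of spinor bundles on quadrics.

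\emph{Step 1 (local geometry and the Lefschetz block).} First I would record the local picture of the blow-up. Analytically $X$ is cut out near $x_0$ by a nondegenerate quadratic form in $n+1$ variables, so $\widetilde X=\mathrm{Bl}_{x_0}X$ is smooth and the exceptional divisor $E$ is a smooth quadric $Q^{n-1}\subset\mathbb{P}^n$ with $\mathcal{N}_{E/\widetilde X}\simeq\mathcal{O}_E(-1)$; in particular $\pi$ resolves the rational (indeed canonical) singularity $x_0$. On $Q^{n-1}$ the chosen spinor bundle $S$ together with $\mathcal{O}_E$ generate a rectangular dual Lefschetz decomposition of $\mathrm{D^b}(E)$ with respect to $\mathcal{O}_E(1)=\mathcal{N}^\ast_{E/\widetilde X}$, whose zeroth block is exactly $\mathcal{A}_0=\langle S,\mathcal{O}_E\rangle$ and contains $p^\ast\mathrm{D^{perf}}(\mathrm{pt})$.

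\emph{Step 2 (admissibility and localization).} With this decomposition in hand, the categorical-resolution theorem of \cite{Kuz08b} quoted above applies verbatim: it yields a semiorthogonal decomposition exhibiting $\widetilde{\mathcal{D}}$ as the complement of an admissible collection of the form $\epsilon_\ast(\mathcal{A}_i(j))$, hence $\widetilde{\mathcal{D}}$ is admissible, and it identifies $\widetilde{\mathcal{D}}$ as a categorical resolution of $\mathrm{D^b}(X)$ with $\mathbf{R}\pi_\ast$ a Verdier localization. Because the Lefschetz decomposition is rectangular, \cite[Corollary 4.8]{Kuz08b} shows that the resolution is strongly crepant, which is the content of claim (1); this is the same mechanism invoked in the Remark following Lemma \ref{K1}.

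\emph{Step 3 (the kernel and its spherical generator).} The kernel of $\mathbf{R}\pi_\ast|_{\widetilde{\mathcal{D}}}$ consists of objects supported on $E$ that are contracted by $\pi$, so it is the thick subcategory generated by those $\epsilon_\ast\mathcal{F}$ with $\mathcal{F}$ in the zeroth block and $\mathbf{R}\Gamma(E,\mathcal{F}(j))=0$ in the relevant range; the spinor bundles are precisely the objects furnishing such vanishing. To produce the spherical generator I would compute $\Ext^\bullet$ via the codimension-one formula for a smooth divisor, namely the triangle
\[
S(1)[1]\longrightarrow \mathbf{L}\epsilon^\ast\epsilon_\ast S\longrightarrow S,
\]
which upon applying $\RHom_E(-,S)$ reduces $\RHom_{\widetilde X}(\epsilon_\ast S,\epsilon_\ast S)$ to $\RHom_E(S,S)$ and $\RHom_E(S(1),S)[-1]$. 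Exceptionality of the spinor bundle gives $\RHom_E(S,S)=k$, and a Bott-type computation of $\RHom_E(S(1),S)$ then pins down the single extra $\Ext$ class and its degree. When $\dim X$ is even, $E$ is an odd quadric with a unique spinor bundle and this directly shows $K=\epsilon_\ast S$ is $2$-spherical and generates the kernel. When $\dim X$ is odd, $E$ is an even quadric carrying two spinor bundles $S,S'$; neither $\epsilon_\ast S$ alone is spherical of the right type, so I would define $K$ by the triangle $K\to\epsilon_\ast S\to\epsilon_\ast S'[2]$ and verify, using the pairwise $\Ext$'s between $S$ and $S'$, that $K$ is $3$-spherical and thick-generates the kernel.

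\emph{Main obstacle.} The delicate part is Step 3: organizing the spinor-bundle cohomology so that exactly one extra $\Ext$ class survives in the correct degree, and in the odd-dimensional case verifying that the glued object $K$ (rather than either spinor bundle) is the spherical generator and that it generates the whole kernel. Establishing the localization rigorously---that $\mathbf{R}\pi_\ast$ realizes $\mathrm{D^b}(X)$ as the Verdier quotient $\widetilde{\mathcal{D}}/\langle K\rangle$, not merely that $\langle K\rangle$ lies in the kernel---also requires the full strength of the crepant categorical-resolution formalism rather than a direct cohomological argument.
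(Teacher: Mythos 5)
You should first note a mismatch of expectations: the paper does not prove this statement at all --- it is quoted verbatim from \cite[Theorem 5.8]{KS24} and used as a black box (only the Remark on strong crepancy and the identification $K=\mathscr{E}[1]$ via the triangle \eqref{DT3} interact with it). So your proposal has to be measured against the original proof in \cite{KS24}, and measured that way it contains a genuine gap already in Steps 1--2. The rectangular dual Lefschetz decomposition with $\mathcal{A}_0=\langle S,\mathcal{O}_E\rangle$ does not exist in general: it is fine for $E\simeq Q^2\simeq\mathbb{P}^1\times\mathbb{P}^1$ (the only case this paper needs, which is why its Remark may invoke \cite[Corollary 4.8]{Kuz08b}) and for odd-dimensional quadrics $E$ (i.e.\ $\dim X$ even), but it fails for even quadrics of dimension $\geq 4$. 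Concretely, on $E=Q^4\cong\mathrm{Gr}(2,4)$ with $S=U$ the tautological subbundle, one computes via Borel--Weil--Bott that $\Ext^2(S,S(-2))\simeq H^2\bigl(Q^4,\mathrm{Sym}^2U(-1)\bigr)\neq 0$, so the would-be rectangular collection $\langle S(-2),\mathcal{O}(-2),S(-1),\mathcal{O}(-1),S,\mathcal{O}\rangle$ is not even semiorthogonal; and the Lefschetz nesting condition $\mathcal{A}_{i+1}\subseteq\mathcal{A}_i$ forbids repairing this by alternating the two spinor bundles. Hence for $\dim X$ odd $\geq 5$ your appeal to \cite[Corollary 4.8]{Kuz08b} for (strong) crepancy collapses, and the dichotomy in part (iii) of the theorem is a symptom of exactly this failure. (Beware also the heuristic you use to motivate (iii): strong crepancy does not correlate with $2$-sphericality --- in the threefold case of this paper the decomposition \emph{is} rectangular, yet $K$ is $3$-spherical.)

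In Step 3 the two claims that constitute the real substance of the theorem are asserted rather than proved, as you partly acknowledge. First, the description of $\ker(\pi_\ast|_{\widetilde{\mathcal{D}}})$ as ``generated by the $\epsilon_\ast\mathcal{F}$ with $\mathcal{F}$ in the zeroth block and acyclic'' is not an argument: identifying and generating the kernel of a categorical resolution of a node is precisely the content of \cite{CGL+} and of the relevant sections of \cite{KS24}, and in the odd-dimensional case one must in addition show that the glued object $K$, not either $\epsilon_\ast S$, thick-generates it. Second, that $\mathbf{R}\pi_\ast$ is a \emph{Verdier localization} onto $\mathrm{D^b}(X)$ is strictly stronger than Kuznetsov's categorical-resolution statement $\pi_\ast\pi^\ast\simeq\mathrm{id}$ on perfect complexes; essential surjectivity onto all of $\mathrm{D^b}(X)$ and the identification of the quotient require the machinery of \cite{KS24}, not a direct cohomological argument. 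Two further points: sphericality of $K$ requires, beyond $\Ext^\bullet(K,K)=k\oplus k[-d]$, the Calabi--Yau property (Serre-invariance of $K$ in $\widetilde{\mathcal{D}}$), which your self-$\Ext$ computation does not address; on the other hand, your reduction via the triangle $S(1)[1]\longrightarrow\mathbf{L}\epsilon^\ast\epsilon_\ast S\longrightarrow S$ is correct, and it in fact shows that for $\dim X$ odd $\epsilon_\ast S$ is \emph{exceptional} (consistent with this paper, where $\epsilon_\ast\mathcal{O}_E(-h)$ appears as an exceptional object in the decomposition \eqref{SOD4}), so the gluing $K\longrightarrow\epsilon_\ast S\longrightarrow\epsilon_\ast S'[2]$ is indeed forced. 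In short: the outline is a fair reconstruction of the known strategy, but the uniform rectangularity claim is false, and the kernel-generation and localization statements --- the theorem's actual content --- remain unproved.
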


In our case, $S=\mathcal{O}_E(-h)$ and $S':=\mathcal{O}_E(-k)$,
so the distinguished triangle (\ref{DT3}) says that $K=\mathscr{E}[1]$,
and hence $\ker(\pi_\ast|_{\widetilde{{\mathcal{D}}}})=\langle\mathscr{E}\rangle$.

Generally, in order to descend a heart $\mathcal{H}$ to the Verdier localization $\mathcal{C}/\mathcal{N}$, 
we need to show that $H^i_{\mathcal{H}}(A) \in \mathcal{N}$ for all $A \in \mathcal{N}$ and all $i\in\mathbb{Z}$.

In our situation it is much simpler by applying the following general lemma.

\begin{lemma}\label{heartdescent}
Let $\mathcal{C}$ be a triangulated category and $\mathcal{Q}$ be the heart of a bounded $t$-structure on $\mathcal{C}$.
Let $\mathcal{N}\subset \mathcal{C}$ be a thick subcategory, and let $q\colon \mathcal{C}\longrightarrow \mathcal{C}/\mathcal{N}$
be the Verdier localization.

Assume that $\mathcal{N}$ is generated by a single object $A \in\mathcal{C}$ and that $\mathcal{Q}\cap\mathcal{N}$ is a Serre subcategory of $\mathcal{Q}$ (i.e. closed under subobjects, quotients, and extensions).
Then the following are equivalent:
\begin{enumerate}
\item $\mathcal{Q}$ descends to the heart $\mathcal{Q}/(\mathcal{Q}\cap\mathcal{N})$ of a bounded $t$-structure on $\mathcal{C}/\mathcal{N}$.
\item There exists an object $B\in\mathcal{Q}$ such that
$\mathcal{N} = \langle B\rangle$.
\end{enumerate}
In this case, $q\colon \mathcal{Q}\longrightarrow \mathcal{Q}/(\mathcal{Q}\cap\mathcal{N})$ is exact.
\end{lemma}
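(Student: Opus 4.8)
The plan is to reduce everything to the standard descent criterion recalled just before the statement: a bounded $t$-structure with heart $\mathcal{Q}$ descends to the Verdier quotient $\mathcal{C}/\mathcal{N}$ precisely when $\mathcal{N}$ is closed under the cohomology functors, i.e. $H^i_{\mathcal{Q}}(N)\in\mathcal{N}$ for every $N\in\mathcal{N}$ and every $i\in\mathbb{Z}$ (equivalently, $\mathcal{N}$ is stable under the truncations $\tau^{\le n}_{\mathcal{Q}},\tau^{\ge n}_{\mathcal{Q}}$); in that case the induced heart is the Serre quotient $\mathcal{Q}/(\mathcal{Q}\cap\mathcal{N})$ and the localization $q$ restricts on hearts to the Serre quotient functor, which is automatically exact. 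Granting this, it suffices to show that under our two standing hypotheses — $\mathcal{N}=\langle A\rangle$ and $\mathcal{Q}\cap\mathcal{N}$ Serre in $\mathcal{Q}$ — the cohomology-closure condition is equivalent to (2), and the final exactness assertion is then immediate from the cited construction.

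For (2) $\Rightarrow$ (1), assume $\mathcal{N}=\langle B\rangle$ with $B\in\mathcal{Q}$, and consider the class $\mathcal{S}$ of objects $N$ with $H^i_{\mathcal{Q}}(N)\in\mathcal{Q}\cap\mathcal{N}$ for all $i$. Since $B\in\mathcal{Q}$, each shift $B[n]$ has cohomology equal to $B$ in degree $-n$ and zero otherwise, so $B[n]\in\mathcal{S}$. For a distinguished triangle $X\to Y\to Z\to X[1]$ with $X,Y\in\mathcal{S}$, the long exact cohomology sequence exhibits $H^i_{\mathcal{Q}}(Z)$ as an extension built from a quotient of $H^i_{\mathcal{Q}}(Y)$ and a subobject of $H^{i+1}_{\mathcal{Q}}(X)$; because $\mathcal{Q}\cap\mathcal{N}$ is closed under subobjects, quotients and extensions, we get $H^i_{\mathcal{Q}}(Z)\in\mathcal{Q}\cap\mathcal{N}$, so $Z\in\mathcal{S}$. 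The same subobject-closure shows $\mathcal{S}$ is closed under direct summands, hence $\mathcal{S}$ is a thick subcategory containing $B$; therefore $\mathcal{N}=\langle B\rangle\subseteq\mathcal{S}$, which is exactly cohomology-closure.

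For (1) $\Rightarrow$ (2), assume descent, so cohomology-closure applies to the single generator $A$ and each $H^i_{\mathcal{Q}}(A)$ lies in $\mathcal{Q}\cap\mathcal{N}$, with only finitely many nonzero by boundedness. Set $B:=\bigoplus_i H^i_{\mathcal{Q}}(A)\in\mathcal{Q}$. Then $\langle B\rangle\subseteq\mathcal{N}$, since each summand lies in $\mathcal{N}$. Conversely, the Postnikov filtration associated with the bounded $t$-structure expresses $A$ as an iterated cone of the shifted cohomology objects $H^i_{\mathcal{Q}}(A)[-i]$, so $A\in\langle B\rangle$ and hence $\mathcal{N}=\langle A\rangle\subseteq\langle B\rangle$. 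Thus $\mathcal{N}=\langle B\rangle$ with $B\in\mathcal{Q}$, giving (2). The closing statement that $q\colon\mathcal{Q}\to\mathcal{Q}/(\mathcal{Q}\cap\mathcal{N})$ is exact is part of the descent construction invoked in the first paragraph.

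I expect the implication (2) $\Rightarrow$ (1) to be the \emph{main obstacle}: the inductive argument along the thick closure of $B$ is where the Serre hypothesis does its essential work, guaranteeing that the subquotients produced by the long exact sequence remain inside $\mathcal{Q}\cap\mathcal{N}$; without it the class $\mathcal{S}$ would fail to be closed under cones and the argument collapses. A secondary point requiring care is passing from the triangulated to the thick closure, i.e. handling direct summands, which is again exactly the subobject-closure of the Serre subcategory.
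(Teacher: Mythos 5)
Your proposal is correct and follows essentially the same route as the paper's proof: both directions reduce to the cohomology-closure criterion, with (2)\,$\Rightarrow$\,(1) proved by showing the class of objects with cohomologies in $\mathcal{Q}\cap\mathcal{N}$ is thick (using the Serre hypothesis in the long exact sequence) and contains $B$, and (1)\,$\Rightarrow$\,(2) proved by taking $B=\bigoplus_i H^i_{\mathcal{Q}}(A)$ and using the Postnikov filtration together with closure under direct summands. The only cosmetic difference is that the paper also verifies the reverse inclusion $\mathcal{C}_0\subseteq\mathcal{N}$, which your argument correctly identifies as unnecessary for the implication.
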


\begin{proof}
For simplicity, we denote $H^i:=H^i_{\mathcal{Q}}$ in this proof.

\noindent\emph{(ii) $\Rightarrow$ (i).}
Assume there exists $B\in\mathcal{Q}$ such that $
\mathcal{N} = \langle B\rangle$, 
and define $\mathcal{C}_0 $ to be the set 
\[\{ F\in\mathcal{C} \mid H^i(F)\in\mathcal{N}\ \text{for all }i\in\mathbb{Z} \}.\]

We first check that $\mathcal{C}_0$ is thick. 
Since $\mathcal{N}$ is thick,
$\mathcal{C}_0$ is clearly closed under shifts and direct summands.
Now if $F_1\longrightarrow F_2\longrightarrow F_3$ is a distinguished triangle with $F_1,F_2\in\mathcal{C}_0$,
for each $i$ we consider the long exact sequence
\[
H^i(F_1)\xlongrightarrow{f_i} H^i(F_2)\longrightarrow H^i(F_3)\longrightarrow
H^{i+1}(F_1)\xlongrightarrow{f_{i+1}} H^{i+1}(F_2).
\]

Then there is a short exact sequence in the heart $\mathcal{Q}$
\[
0 \longrightarrow \cok(f_i) \longrightarrow H^i(F_3) \longrightarrow \ker(f_{i+1}) \longrightarrow 0.
\]

Note that $\cok(f_i)$ is a quotient of $H^i(F_2)$ and $\ker(f_{i+1})$ is a subobject of $H^{i+1}(F_1)$.
As $\mathcal{N}\cap\mathcal{Q}$ is a Serre subcategory of $\mathcal{Q}$, 
they are both in $\mathcal{N}\cap\mathcal{Q}$,
and hence we
deduce $H^i(F_3)\in\mathcal{N}\cap\mathcal{Q}$ for all $i$, i.e.\ $F_3\in\mathcal{C}_0$.
Thus $\mathcal{C}_0$ is closed under cones.
Now as $\mathcal{C}_0$ is thick and contains $B$, we get $\mathcal{N} = \langle B\rangle \subset \mathcal{C}_0$.

Conversely, let $F \in \mathcal{C}_0$. Then $H^i(F) \in \mathcal{N}$ for all $i$,
and the standard cohomological filtration expresses $F$ as obtained from the objects $H^i(F)[-i]$ by finitely many shifts and cones.
Since $\mathcal{N}$ is thick, we have $F \in \mathcal{N}$,
and hence $\mathcal{C}_0 \subset \mathcal{N}$.

\noindent\emph{(i) $\Rightarrow$ (ii).}
Assume now that the bounded $t$-structure on $\mathcal{C}$ descends to $\mathcal{C}/\mathcal{N}$. 
This is equivalent to the condition that for every $F \in\mathcal{N}$ and every $i\in\mathbb{Z}, H^i(F)\in\mathcal{N}.$

As $\mathcal{N} = \langle A\rangle$ for some $A\in\mathcal{C}$,
we obtain $H^i(A)\in\mathcal{N}\cap \mathcal{Q}$ for all $i$.
We set $B := \oplus_i H^i(A) \in \mathcal{Q}$ and claim that $\mathcal{N} = \langle B\rangle$.
First, each $H^i(A)\in\mathcal{N}$ and $\mathcal{N}$ is thick, so $B\in\mathcal{N}$ and hence $\langle B\rangle \subset \mathcal{N}.$

On the other hand, $A$ admits a finite cohomological filtration whose successive cones are of the form $H^i(A)[-i]$.
Each $H^i(A)[-i]$ is a direct summand of $B[-i]$,
so $H^i(A)[-i]\in\langle B\rangle$. By closure under cones and shifts we deduce $A\in\langle B\rangle$. 
Therefore
\[
\mathcal{N} = \langle A\rangle \subset \langle B\rangle.
\]

Combining the two inclusions gives $\mathcal{N} = \langle B\rangle$,
with $B\in\mathcal{Q}$, as required.
\end{proof}

Combining Theorem \ref{KS24} and Lemma \ref{tildeA},
we see that $\ker(\pi_\ast|_{\widetilde{\mathcal{D}}})\cap\widetilde{\mathcal{A}}=[\mathscr{E}[-2]]$ is a Serre subcategory of $\widetilde{\mathcal{A}}$,
and then by Lemma \ref{heartdescent},
we immediately see that $\widetilde{\mathcal{A}}$ descends to the heart of a bounded t-structure
$\mathcal{A}:=\pi_\ast(\widetilde{\mathcal{A}})=\widetilde{\mathcal{A}}/(\widetilde{\mathcal{A}}\cap \ker(\pi_\ast|_{\widetilde{\mathcal{D}}}))$ on $\mathrm{D^b}(X)$,
which is of finite length with finitely many simple objects.

\begin{lemma}\label{monolifting}
Let $q \colon \mathcal B \longrightarrow \mathcal A$ be an exact functor between abelian categories.
Let $Z_{\mathcal A}$ be a (weak) stability function on $\mathcal A$, and define $Z_{\mathcal B} := Z_{\mathcal A} \circ q$. 

Assume that the following condition holds:
\begin{enumerate}[label=\textup{($\diamond$)}, ref=\textup{($\diamond$)}, leftmargin=1em]
\item\label{ML}
For every $E \in \mathcal B$ and every monomorphism $i \colon A' \hookrightarrow q(E)$ in $\mathcal A$, there exist a monomorphism $j \colon E' \hookrightarrow E$ in $\mathcal B$  such that $q(E') \simeq A'$.
\end{enumerate}
Then for every $Z_{\mathcal B}$-semistable object $E \in \mathcal B$, the object $q(E)$ is either zero or $Z_{\mathcal A}$-semistable. 

In particular, if $q$ is a Serre quotient $\mathcal{B} \longrightarrow \mathcal{B}/\mathcal{S}$, then $q$ preserves semistability in the above sense.
\end{lemma}
\begin{proof}
Let $E \in \mathcal B$ be $Z_{\mathcal B}$-semistable with $q(E)\neq 0$.
Assume the contrary that $q(E)$ is $Z_{\mathcal A}$-unstable.
Then there exists a nonzero proper subobject $i \colon A' \hookrightarrow q(E)$ which destabilizes $q(E)$.

By our assumption, $A'$ corresponds to a nonzero proper subobject $j \colon E' \hookrightarrow E$ in $\mathcal B$ with $q(E') \simeq A'$.
Now since $Z_{\mathcal B} = Z_{\mathcal A} \circ q$,
we have 
$Z_{\mathcal B}(E') = Z_{\mathcal A}(A')$ and $Z_{\mathcal B}(E) = Z_{\mathcal A}(q(E))$,
and hence
\[\mu_{\mathcal B}(E') =\mu_{\mathcal{A}}(A') > \mu_{\mathcal{A}}(q(E))= \mu_{\mathcal B}(E),\] a contradiction.

For the case that $q$ is a Serre quotient, $q$ satisfies the condition \ref{ML} by \cite[Lemma 14.1.9]{Kra22}.
\end{proof}

We can now prove our main result, that is, we will construct a weak stability condition on $\widetilde{\mathcal{D}}$ and a compatible Bridgeland stability condition on $\mathrm{D^b}(X)$.
In particular, $\Stab(X) \neq \varnothing$.

\begin{theorem}
There exist a weak stability condition $\sigma_{\widetilde{\mathcal{D}}} = (Z_{\widetilde{\mathcal{A}}},\widetilde{\mathcal{A}})$ on $\widetilde{\mathcal{D}}$, satisfying the support property with respect to the quotient lattice $\mathrm{K_{num}}(\widetilde{\mathcal{D}})/\ker (\pi_\ast|_{\widetilde{\mathcal{D}}})$,
and a Bridgeland stability condition $\sigma_{\mathrm{D^b}(X)} = (Z_{\mathcal{A}},\mathcal{A})$ on $\mathrm{D^b}(X)$,
such that
\begin{enumerate}
  \item $\mathbf{R}\pi_\ast\colon \widetilde{\mathcal{D}} \longrightarrow \mathrm{D^b}(X)$ is compatible with the central charges, i.e. $Z_{\widetilde{\mathcal{A}}} = Z_{\mathcal{A}} \circ \pi_\ast$;
  \item $\mathbf{R}\pi_\ast\colon \widetilde{\mathcal{A}} \longrightarrow \mathcal{A}$ is an exact functor;
  \item for any $Z_{\widetilde{\mathcal{A}}}$-semistable object $E$, its pushforward $\mathbf{R}\pi_\ast E$ is either $0$ or $Z_{\mathcal{A}}$-semistable.
  In particular, $\pi_\ast$ induces a morphism $\mathcal{M}_{\sigma_{\widetilde{A}}}(v) \longrightarrow \mathcal{M}_{\sigma_\mathcal{A}}(\pi_\ast v)$ between the moduli spaces for any class $v$ with $\pi_\ast v \neq 0$.
\end{enumerate}
\end{theorem}
\begin{proof}
We pick $z, w_i \in \mathbb{H}$ and define a group homomorphism $Z_{\widetilde{\mathcal{A}}} \colon  \mathrm{K_{num}}(\widetilde{\mathcal{D}})\longrightarrow \mathbb{C}$ by setting 
\[Z_{\widetilde{\mathcal{A}}}(\mathscr{E}[-2]) = 0, Z_{\widetilde{\mathcal{A}}}(\mathscr{F}[-1])=Z_{\widetilde{\mathcal{A}}}(\mathscr{G})=z, Z_{\widetilde{\mathcal{A}}}(\mathcal{O}_{\widetilde{X}}(iH)[p_i])=w_i\]
and extend by linearity.

By Lemma \ref{kernel} we obtain that $\ker (\pi_\ast|_{\widetilde{\mathcal{D}}}) = \langle[\mathscr{E}[-2]], [\mathscr{F}[-1]]-[\mathscr{G}] \rangle \subseteq \mathrm{K_{num}}(\widetilde{\mathcal{A}})$,
which is contained in $\ker Z_{\widetilde{\mathcal{A}}}$.
This shows that $Z_{\widetilde{\mathcal{A}}}$ factors through $\mathrm{K_{num}}(\widetilde{\mathcal{D}})/\ker (\pi_\ast|_{\widetilde{\mathcal{D}}})$.

The essential surjectivity of $\mathbf{R}\pi_\ast$ implies the surjectivity of maps on numerical Grothendieck groups, 
and hence we can define a central charge $Z_\mathcal{A}$ on $\mathcal{A}$ by the compatibility $Z_\mathcal{A} \circ \pi_\ast = Z_{\widetilde{\mathcal{A}}}$.
Under this definition, we see that each simple object in $\mathcal{A}$ takes $Z_\mathcal{A}$-value in $\mathbb{H}$ and hence $Z_\mathcal{A}$ is a stability function.

Note that the Harder--Narasimhan properties and support properties for both $\sigma_{\widetilde{\mathcal{D}}}$ and $\sigma_{\mathrm{D^b}(X)}$ are automatic since $\widetilde{\mathcal{A}}$ and its quotient $\mathcal{A}$ are of finite length with finitely many simple objects.

Finally, (iii) follows immediately from Lemma \ref{monolifting} since $\mathcal{A}=\widetilde{\mathcal{A}}/(\widetilde{\mathcal{A}}\cap \ker(\pi_\ast|_{\widetilde{\mathcal{D}}}))$.
\end{proof}

\begin{remark}
Theorem \ref{KS24} in particular implies that $\mathbf{R}\pi_\ast \colon\widetilde{\mathcal{D}}'\longrightarrow\mathcal{K}u(X)$ is also a Verdier localization with $\ker(\pi_\ast|_{\widetilde{{\mathcal{D}}}'})=\langle\mathscr{E}\rangle$.

We can therefore restrict the construction to obtain a weak stability condition on $\widetilde{\mathcal{D}}'$ and a Bridgeland stability condition on $\mathcal{K}u(X)$,
satisfying the same compatibilities as in the theorem.
\end{remark}

%\nocite{*}
\bibliographystyle{alpha}
\bibliography{ref}
\end{document}